\documentclass[11pt]{article}
\usepackage{amssymb,amsfonts,amsmath,latexsym,epsf,tikz,url}
\usepackage[usenames,dvipsnames]{pstricks}
\usepackage{pstricks-add}
\usepackage{epsfig}
\usepackage{pst-grad} % For gradients
\usepackage{pst-plot} % For axes
\usepackage[space]{grffile} % For spaces in paths
\usepackage{etoolbox} % For spaces in paths
\makeatletter % For spaces in paths
\patchcmd\Gread@eps{\@inputcheck#1 }{\@inputcheck"#1"\relax}{}{}
\makeatother

\newtheorem{theorem}{Theorem}[section]

\newtheorem{conjecture}[theorem]{Conjecture}
\newtheorem{corollary}[theorem]{Corollary}

\newtheorem{remark}[theorem]{Remark}
\newtheorem{example}[theorem]{Example}
\newtheorem{definition}[theorem]{Definition}

\newcommand{\qed}{\hfill $\square$\medskip}

\textwidth 14.5cm
\textheight 21.0cm
\oddsidemargin 0.4cm
\evensidemargin 0.4cm
\voffset -1cm

\begin{document}

\title{A graph related to Euler $\phi$ function}

\author{
Nima Ghanbari
\and
Saeid Alikhani$^{}$\footnote{Corresponding author}
}

\date{\today}

\maketitle

\begin{center}
Department of Informatics, University of Bergen, P.O. Box 7803, 5020 Bergen, Norway\\
Department of Mathematics, Yazd University, 89195-741, Yazd, Iran\\
{\tt Nima.ghanbari@uib.no$^{}$, alikhani@yazd.ac.ir}
\end{center}

%%%%%%%%%%%%%%ABSTRACT%%%%%%%%%%%%%%%%%%%%%%%%%%%%%%%%%%%%%%%%%%%%%%%%%%%%%%%%%%%%

\begin{abstract}
Euler function $\phi(n)$ is the number of positive integers less than $n$ and relatively prime to $n$.  
 Suppose that  $\phi^1(n)=\phi(n)$ and $\phi^i(n)=\phi(\phi^{i-1}(n))$.  
Let $A\subseteq \mathbb{N}$,  and
	 $A_{\phi}=\{ \phi^k(n)|  n\in A , k\in \mathbb{N} \cup \{0\}\}.$
	 We consider a graph $G_{\phi}(A)=(V,E)$, where $V=A_{\phi}$ and $E=\{\{r,s\}| r,s\in V, \phi(r)=s \}$. We say a graph $H$ is a $G_\phi$-graph, if  there exists a set of natural numbers $A$, such that $H=G_{\phi}(A)$. 
In this paper we study the graph $G_{\phi}(A)$ and investigate some specific graphs and some chemical trees as $G_\phi$-graph. 
\end{abstract}

\noindent{\bf Keywords:} Euler $\phi$-function, number, graph, corona, chemical tree.  

\medskip
\noindent{\bf AMS Subj.\ Class.:} 05C05, 11AXX.

%%%%%%%%%%%%%%%%%%%%%%%%%%%%%%%%%%%%%%%%%%%%%%%%%%%%%%%%%%%%%%%%%%%%%%%%%%%%%%%%%
%%%%%%%%%%%%%%%%%%%%%%%%%%%%%%%%%%%%%%%%%%%%%%%%%%%%%%%%%%%%%%%%%%%%%%%%%%%%%%%%%
\section{Introduction}

One of the topics in  number theory is the Euler $\phi$ function, which is denoted by  $\phi(n)$ and is the number of positive integers less than $n$ and relatively prime
to $n$.  Gauss's theorem state that the sum of $\phi(d)$ over the
divisors $d$ of $n$ is $n$. In other words, 
$$\sum_{d|n}\phi(d) = n.$$

Two facts about the Euler $\phi$ function use for evaluating $\phi(n)$. First, if $p$ is prime, then
$\phi(p^k)=p^k-p^{k-1}$ and second, $\phi$ is multiplicative; that is, if $m$ and $n$ are relatively prime, then $\phi(mn)=\phi(m)\phi(n)$.
The number $15$ has interesting properties that  
$$15=\phi(15)+\phi(\phi(15))+\phi(\phi(\phi(15)))+\phi(\phi(\phi(\phi(15)))).$$ 
 Loomis, Plytage and Polhill in \cite{sum} asked for which numbers does this happen? 
 Let $\phi^0(n)=n$, $\phi^1(n)=\phi(n)$ and $\phi^i(n)=\phi(\phi^{i-1}(n))$. So can iterate $\phi$ to create
 the sequence $\{n,\phi(n), \phi^2(n),...\}$. Parts of this sequence can be show as a graph  (see \cite{sum}). 
 Following Pillai \cite{Pillai}, let $R(n)$ denote the smallest integer $k$ such that $\phi^k(n)=1$, in other words, $R(n)$
  is the number of steps it takes the sequence beginning with $n$ to reach $1$. % In graph theory language, $R(n)=d(n,1)$. 
     Loomis, Plytage and Polhill made the following definitions:  
 
 Define $\Phi(n)$ by $\Phi(n)=\sum_{i=1}^{R(n)} \phi^i(n)$. They called $n$ a perfect totient number (PTN) if $\Phi(n)=n$.
 They proved that a prime power $p^k$ is a PTN if and only if $p=3$. Also they showed that if $n$ is a PTN and $4n+1$ is prime, then $3(4n+1)$ is also a PTN.
 
  A graph $G$ is a pair $G=(V,E)$, where $V$ and $E$ are the vertex set and the edge set of $G$, respectively.  The degree or valency of a vertex $u$ in a graph $G$ (loopless),
 denoted by $deg(u)$, is the number of edges meeting at $u$. The distance between two vertices $u$ and $v$ of $G$, denoted by $d(u,v)$, is
 defined as the least length of the  walks between them. As usual we denote the path and cycle of order $n$ by $P_n$ and $C_n$, respectively. Also
 $K_{1,n}$ is the star graph with $n+1$ vertices. 

 \medskip
 In the next section, we introduce a new graph related to Euler $\phi$ function and  investigate the properties of this graph.Also, we consider some specific graphs and some chemical trees as $G_\phi$-graph, in Section 3.

\section{A graph related to Euler $\phi$ function}

In this section, we state the definition of a new graph which is related to Euler $\phi$ function and investigate its properties.

	\begin{definition}
	For any set of natural numbers $A$, let
	 $A_{\phi}=\{ \phi^k(n)|  n\in A , k\in \mathbb{N} \cup \{0\}\}.$
	 Graph related to Euler $\phi$ function is denoted by $G_{\phi}(A)$ and is a graph with vertex set $V=A_{\phi}$ and edge set $E=\{\{r,s\}| r,s\in V, \phi(r)=s \}$. 
	 We say a graph $H$ is a $G_\phi$-graph, if  there exists a set of natural numbers $A$, such that $H=G_{\phi}(A)$.
	\end{definition}

	\begin{example}
	Let $A=\{ 3, 7 , 11 , 20\}\subseteq \mathbb{N}$. We have
	\begin{center}
	$\phi(20)=8, ~ \phi(8)=\phi^2(20)=4,~\phi(4)=\phi^3(20)=2,~\phi(2)=\phi^4(20)=1,$\\
	$\phi(11)=10,~ \phi(10)=\phi^2(11)=4,~\phi(4)=\phi^3(11)=2,~\phi(2)=\phi^4(11)=1,$\\
	$\phi(7)=6,~ \phi(6)=\phi^2(7)=2,~\phi(2)=\phi^3(7)=1$ and \\
	$\phi(3)=2, ~\phi(2)=\phi^2(3)=1.$
	\end{center}
	So we have $A_\phi=\{ 1, 2, 3, 4, 6, 7,8,10 , 11 , 20\}$ and therefore the graph $G_{\phi}(A)$ is a graph  in Figure \ref{1}.
	\label{example}
	\end{example}

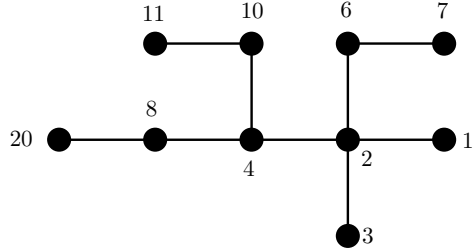
\begin{figure}
\begin{center}
\psscalebox{0.8 0.8}
{
\begin{pspicture}(0,-7.1635575)(7.74,-3.0564423)
\psdots[linecolor=black, dotsize=0.4](0.82,-5.366442)
\psdots[linecolor=black, dotsize=0.4](2.42,-5.366442)
\psdots[linecolor=black, dotsize=0.4](4.02,-5.366442)
\psdots[linecolor=black, dotsize=0.4](5.62,-5.366442)
\psdots[linecolor=black, dotsize=0.4](7.22,-5.366442)
\psdots[linecolor=black, dotsize=0.4](4.02,-3.7664423)
\psdots[linecolor=black, dotsize=0.4](5.62,-3.7664423)
\psdots[linecolor=black, dotsize=0.4](2.42,-3.7664423)
\psdots[linecolor=black, dotsize=0.4](5.62,-6.966442)
\psdots[linecolor=black, dotsize=0.4](7.22,-3.7664423)
\psline[linecolor=black, linewidth=0.04](0.82,-5.366442)(7.22,-5.366442)(7.22,-5.366442)
\psline[linecolor=black, linewidth=0.04](5.62,-3.7664423)(5.62,-6.966442)(5.62,-6.966442)
\psline[linecolor=black, linewidth=0.04](5.62,-3.7664423)(7.22,-3.7664423)(7.22,-3.7664423)
\psline[linecolor=black, linewidth=0.04](2.42,-3.7664423)(4.02,-3.7664423)(4.02,-5.366442)(4.02,-5.366442)
\rput[bl](0.0,-5.466442){20}
\rput[bl](2.26,-4.966442){8}
\rput[bl](3.88,-5.966442){4}
\rput[bl](5.84,-5.8064423){2}
\rput[bl](7.52,-5.506442){1}
\rput[bl](5.5,-3.3264422){6}
\rput[bl](7.1,-3.3464422){7}
\rput[bl](3.84,-3.3464422){10}
\rput[bl](2.2,-3.3864422){11}
\rput[bl](5.86,-7.0864425){3}
\end{pspicture}
}
\end{center}
\caption{Graph related to Euler $\phi$ function of Example \ref{example}.} \label{1}
\end{figure}

	\begin{theorem}      
		\begin{enumerate} 
			\item[(i)] 
			For any set of natural numbers $A$, the graph $G_{\phi}(A)$ is a tree.
			\item[(ii)] 
			The distance between two vertices $i$ ($i\in G_{\phi}(A)$) and $1$ is $d(i,1)=R(i)$.
		\end{enumerate}
	\end{theorem}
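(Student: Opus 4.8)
The plan is to derive both parts from a single structural observation about $\phi$: its only fixed point is $1$, and $\phi(v) < v$ for every integer $v \ge 2$. Together with the fact that $A_\phi$ is closed under $\phi$ (immediate from its definition, since $\phi^{k}(n)\in A_\phi$ implies $\phi^{k+1}(n)\in A_\phi$), these facts control the entire edge structure, because every edge of $G_\phi(A)$ has the form $\{v,\phi(v)\}$ for some vertex $v\neq 1$. Note that the would-be ``edge'' $\{1,\phi(1)\}=\{1\}$ is a loop and is discarded, so $G_\phi(A)$ is loopless as required.

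For part (i), I would first establish connectivity. Given any vertex $v\in V=A_\phi$, Pillai's result guarantees a finite $R(v)$ with $\phi^{R(v)}(v)=1$, so the forward orbit $v,\phi(v),\phi^2(v),\ldots,\phi^{R(v)}(v)=1$ lies entirely in $V$ (by closure) and each consecutive pair is an edge. This exhibits a path from an arbitrary $v$ to $1$, so every vertex is joined to $1$ and the graph is connected. To show acyclicity I would argue by contradiction: in any cycle $C$, let $M$ be the largest vertex. Since a cycle has at least three vertices, $M\ge 2$, and $M$ has two \emph{distinct} neighbours $w_1,w_2$ in $C$. For each neighbour $w$ the edge $\{M,w\}$ forces $\phi(M)=w$ or $\phi(w)=M$; the second option gives $M=\phi(w)<w\le M$, which is absurd, so $\phi(M)=w_1=w_2$, contradicting distinctness. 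Hence $G_\phi(A)$ is connected and acyclic, i.e.\ a tree. For finite $A$ one may instead count edges: the map $v\mapsto\{v,\phi(v)\}$ is a bijection from $V\setminus\{1\}$ onto $E$, since the larger element of $\{v,\phi(v)\}$ recovers $v$, whence $|E|=|V|-1$, and a connected graph with $|V|-1$ edges is a tree.

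For part (ii), the same orbit does the work. Because $\phi$ strictly decreases values above $1$, for $i\ge 2$ the orbit $i>\phi(i)>\cdots>\phi^{R(i)}(i)=1$ is strictly decreasing and hence consists of $R(i)+1$ \emph{distinct} vertices, so it is a genuine path of length $R(i)$ from $i$ to $1$ (the case $i=1$ being trivial, since then $R(1)=0$). By part (i) the tree $G_\phi(A)$ has a unique path between any two vertices, and this path is automatically a geodesic, giving $d(i,1)=R(i)$. I do not expect a serious obstacle here; the only points requiring care are the monotonicity estimate $\phi(v)<v$ (with the attendant handling of the fixed point $1$ and the loop it would otherwise create) and the verification that the decreasing orbit never repeats a vertex, both of which are elementary.
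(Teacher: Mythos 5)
Your proof is correct, and it is both more complete and differently organized than the paper's own argument. The paper proves acyclicity only: it assumes a cycle exists, reduces ``without loss of generality'' to length~3, assumes the edges are coherently oriented as $\phi(a)=b$, $\phi(b)=c$, $\phi(c)=a$, and derives $\phi^3(a)=a$, contradicting the fact that $\phi^k(n)<n$ for $n>1$ (misprinted as $>$ in the paper); part (ii) is dismissed with ``it follows from the definitions.'' Your extremal argument --- taking the largest vertex $M$ of a putative cycle and showing both of its cycle-neighbours would have to equal $\phi(M)$ --- is sharper: it handles cycles of every length and every possible orientation of the edges simultaneously, which are exactly the two points the paper waves away (the reduction to length~3 is not genuinely without loss of generality, and the coherent orientation actually needs a pigeonhole observation: a cycle of length $\ell$ has $\ell$ edges, but each vertex emits at most one edge under the function $\phi$). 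You also supply two things the paper omits entirely: connectivity, without which acyclicity only yields a forest rather than a tree, proved via the orbit of each vertex terminating at $1$; and a genuine proof of (ii), via strict monotonicity of the orbit (guaranteeing the orbit is a path of length $R(i)$) combined with uniqueness of paths in a tree. One micro-quibble: in the step $M=\phi(w)<w\le M$ you invoke $\phi(w)<w$, which requires $w\ge 2$; for $w=1$ the relation $\phi(w)=M$ is impossible anyway since $\phi(1)=1<M$, so the case is harmless, but it deserves a word since your stated inequality does not literally apply there.
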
 
	
	\begin{proof}
	\begin{enumerate}
	\item[(i)]
	Suppose that $G_{\phi}(A)$ is not a tree. Therefore, there is a cycle of length at least 3. Without loss of generality, suppose that the length of cycle is 3. So there are vertices  $a,b$ and $c$ and the cycle is $abca$. By the definition of $G_{\phi}(A)$, we suppose that $\phi(a)=b$, $\phi(b)=c$ and $\phi(c)=a$. Therefore 
	\begin{center}
	$\phi(a)=b$, $\phi^2(a)=\phi(b)=c$ and $\phi^3(a)=\phi(c)=a$,
	\end{center}
	which is a contradiction with the fact that for every $n>1$, and $k\neq 1$, $\phi^n(k)>k$.
	\item[(ii)]
	It follows from the definitions. \qed
	\end{enumerate}
	\end{proof}
	
	Here we ask a main question: Which trees can be a $G_{\phi}$-graph? In the following theorem, we show that the path graph $P_n$ can be a  $G_{\phi}$-graph. 
	
	\begin{theorem} \label{thm1}
		The path graph $P_n$ ($n>1$) is a $G_{\phi}$ graph. 
	\end{theorem}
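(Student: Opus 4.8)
The plan is to realize $P_n$ as $G_\phi(A)$ for a well-chosen \emph{singleton} $A=\{m\}$. The guiding idea is that part (i) already guarantees $G_\phi(A)$ is a tree, so to force a path it suffices to seed the construction with a single number whose $\phi$-orbit is an unbranched chain. For a singleton $A=\{m\}$ we have $A_\phi=\{m,\phi(m),\phi^2(m),\dots,1\}$, and the edges join each $\phi^i(m)$ to $\phi^{i+1}(m)$, so the only thing to establish is that this orbit is a simple path on exactly $n$ vertices.

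First I would record that for every integer $k>1$ one has $\phi(k)<k$, so the iterates $m>\phi(m)>\phi^2(m)>\cdots$ strictly decrease until they reach $1$ and then stabilize. Consequently the distinct values in the orbit of $m$ are pairwise different, and by part (ii) the distance from $m$ to $1$ is $R(m)$; hence $G_\phi(\{m\})$ is a path on $R(m)+1$ vertices. This reduces the theorem to the arithmetic statement that for each $n>1$ there is a seed $m$ with $R(m)=n-1$.

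Next I would exhibit such a seed explicitly by taking $m=2^{n-1}$ and using $\phi(2^k)=2^{k-1}$ for $k\ge 1$. Iterating gives the orbit $2^{n-1},2^{n-2},\dots,2,1$, which consists of exactly $n$ distinct values, so $R(2^{n-1})=n-1$ as required.

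The only point needing care --- and the mild obstacle --- is checking that $G_\phi(\{2^{n-1}\})$ has \emph{no} edges beyond the consecutive ones, i.e.\ that the construction does not accidentally introduce branching or shortcuts. Since $\phi$ is a function, each vertex $r$ has a unique image $\phi(r)$, so a vertex could acquire extra neighbours only if two distinct powers of two shared the same $\phi$-value; but $\phi(2^k)=2^{k-1}$ is injective on $\{2^0,\dots,2^{n-1}\}$, so the only pairs $\{r,s\}$ with $\phi(r)=s$ are $\{2^k,2^{k-1}\}$ for $1\le k\le n-1$. Noting that $\phi(1)=1$ contributes no loop in the loopless graph, the edge set is exactly that of $P_n$, completing the argument.
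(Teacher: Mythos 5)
Your proof is correct and uses exactly the same construction as the paper: take $A=\{2^{n-1}\}$ so that $A_\phi=\{1,2,4,\dots,2^{n-1}\}$ and the $\phi$-orbit of powers of two forms $P_n$. The additional care you take in checking that no extra edges arise (injectivity of $\phi$ on the powers of two) is a detail the paper leaves implicit, but the underlying argument is the same.
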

	    
    \begin{proof}
	Since $\phi^{n-1}  (2^{n-1})=\phi ^{n-2} (2^{n-2})=\phi^{n-3}(2^{n-3})= \ldots =1$, it suffices to consider $A=\{ 2^{n-1}\}$. So  
	$A_\phi=\{ 1, 2,4,\ldots, 2^{n-2},2^{n-1}\}$ and $G_{\phi}(A)= P_n$.
	\qed
	\end{proof}

    The following result is an immediate result of Theorem \ref{thm1}:

	\begin{corollary} 
	For every natural number $m$, there exist a set of natural numbers $A$, such that  $G_{\phi}(A)$ has size $m$.
	\end{corollary}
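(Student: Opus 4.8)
The plan is to invoke Theorem~\ref{thm1} directly, since the corollary is advertised as an immediate consequence. Recall that the \emph{size} of a graph denotes its number of edges, and that a path $P_n$ on $n$ vertices (order $n$) has exactly $n-1$ edges. Thus, to realize a prescribed size $m$, I would search for a path having $m+1$ vertices, and then appeal to the fact that every such path is already known to be a $G_\phi$-graph.

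Concretely, I would fix an arbitrary natural number $m$ and set $n = m+1$; since $m \geq 1$ we have $n > 1$, so Theorem~\ref{thm1} applies and guarantees that $P_{m+1}$ is a $G_\phi$-graph. Following the construction in the proof of that theorem, taking $A = \{2^m\}$ yields $A_\phi = \{1, 2, 4, \ldots, 2^{m-1}, 2^m\}$, and the resulting graph $G_\phi(A) = P_{m+1}$ has exactly $m$ edges, i.e.\ size $m$. This exhibits the required set $A$ explicitly rather than merely asserting its existence.

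There is essentially no obstacle here: the only points worth checking are the routine edge count of a path and the boundary case $m = 1$, which corresponds to $A = \{2\}$, $A_\phi = \{1, 2\}$, and the single-edge graph $P_2$. Hence the statement follows immediately from Theorem~\ref{thm1}.
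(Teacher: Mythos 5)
Your proposal is correct and is exactly the argument the paper intends: the corollary is stated as an immediate consequence of Theorem~\ref{thm1}, and taking $A=\{2^m\}$ so that $G_\phi(A)=P_{m+1}$ has precisely $m$ edges is the same path construction used there. Your explicit treatment of the edge count and the boundary case $m=1$ merely spells out what the paper leaves implicit.
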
 

Also we have the following theorem: 
 
	\begin{theorem} 
		For every natural number $n$, there exist a set of natural numbers $A$, such that $|A|=n$ and  $G_{\phi}(A)$ has size $n$.
	\end{theorem}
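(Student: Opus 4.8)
The plan is to reduce the statement to a counting fact and then exhibit an explicit family of sets $A$. Recall that here the \emph{size} of a graph means its number of edges, and that by the first theorem of this section every $G_{\phi}(A)$ is a tree; hence its number of edges equals $|V|-1=|A_{\phi}|-1$. Consequently, to obtain a graph of size $n$ whose defining set has exactly $n$ elements, it suffices to find a set $A$ with $|A|=n$ whose $\phi$-closure $A_{\phi}$ contains exactly $n+1$ elements. In other words, I want $A$ to be ``almost closed'' under $\phi$, so that forming the closure adds precisely one new vertex.

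The powers of $2$ are ideal for this, because $\phi(2^{k})=2^{k-1}$, so iterating $\phi$ on a power of two never leaves the set of powers of two. Concretely I would take
$$A=\{\,2^{1},2^{2},\ldots,2^{n}\,\},$$
so that $|A|=n$. Since $\phi(2^{k})=2^{k-1}$ for every $k\ge 1$ (and $\phi(2)=1$), every iterate $\phi^{j}(2^{k})$ is again a power of two already lying in $A$, with the single exception of $2^{0}=1$. Hence the closure is
$$A_{\phi}=\{\,2^{0},2^{1},\ldots,2^{n}\,\}=\{1,2,4,\ldots,2^{n}\},$$
which contains exactly $n+1$ elements: the only vertex the closure adds to $A$ is $1$.

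Finally I would combine the two observations. Because $G_{\phi}(A)$ is a tree on $|A_{\phi}|=n+1$ vertices, it has exactly $(n+1)-1=n$ edges, that is, size $n$, while $|A|=n$ by construction, completing the proof. (In fact this $A$ produces the path $P_{n+1}$, consistent with Theorem \ref{thm1}.) There is no genuinely hard step here; the only points requiring care are that one must count vertices of $A_{\phi}$ rather than edges of $A$ itself, with the tree structure converting the vertex count into the edge count, and that one must check the closure introduces no power of two beyond those already present, which is immediate from the identity $\phi(2^{k})=2^{k-1}$.
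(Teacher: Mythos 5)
Your proof is correct, and it follows essentially the same route as the paper: take a run of consecutive powers of two, so that the closure under $\phi$ stays inside the powers of two and $G_{\phi}(A)$ is a path. The only substantive difference is an off-by-one shift, and here your version is arguably the more careful one. The paper takes $A=\{1,2,4,\ldots,2^{n-1}\}$, whose closure is $A$ itself, giving the path $P_n$ with $n$ vertices but only $n-1$ edges; under the standard convention that the \emph{size} of a graph is its number of edges (the convention you adopt, and the natural one given that the paper's graphs are loopless even though $\phi(1)=1$), that construction falls one edge short of the claim. Your shifted set $A=\{2,4,\ldots,2^{n}\}$, whose closure adds exactly the vertex $1$, produces $P_{n+1}$ with exactly $n$ edges and $|A|=n$, which is precisely what the statement asks for. (If the authors instead intended ``size'' to mean the number of vertices, the roles reverse and your set overshoots by one; but that reading conflicts with standard usage and with your explicit, and correct, use of the tree property to convert the vertex count $|A_{\phi}|$ into the edge count $|A_{\phi}|-1$.)
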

	
	\begin{proof}
	It suffices to consider
	$A=\{ 1, 2,4,\ldots, 2^{n-1}\}$. The rest is similar to  the proof of Theorem \ref{thm1}.
	\qed
	\end{proof}

	\begin{theorem} \label{thm2}
		Let $G$ be the $G_{\phi}$-graph related to a set $A$ of size $n$. If  $t$ is the number of its leaves, then $1\leq t\leq n+1$. Also there is a set $B$ such that the number of  leaves of $G_{\phi}(B)$ is $t$ and $|B|=n$ .
	\end{theorem}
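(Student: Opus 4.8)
The plan is to pin down exactly which vertices of $G=G_\phi(A)$ are leaves, read the two bounds off that description, and then build, for each admissible value, a set $B$ of size $n$ realizing it. First I would record the local structure of the tree $G$ (it is a tree by the previous theorem). Every vertex $v\in V$ with $v\neq 1$ carries the edge $\{v,\phi(v)\}$, since $\phi(v)\in A_\phi=V$ and $\phi(v)<v$; hence $\deg(v)=1+|\{y\in V:\phi(y)=v\}|$, so such a $v$ is a leaf precisely when it is a \emph{source}, i.e. when no $y\in V$ satisfies $\phi(y)=v$. The vertex $1$ is special: its only possible neighbour is $2$ (because $\phi(y)=1$ forces $y\in\{1,2\}$ and $\phi(1)=1$ would be a loop), so $1$ is a leaf iff $2\in V$ and isolated iff $V=\{1\}$.

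For the upper bound I would show that every source lies in $A$. If $v=\phi^{k}(a)$ with $a\in A$ and $k\geq 1$, then $\phi^{k-1}(a)\in V$ maps onto $v$, so $v$ is not a source; thus a source must have $k=0$, i.e. $v\in A$. Consequently the number of non-root leaves is at most $|A|=n$, and adding the one possible extra leaf $1$ gives $t\leq n+1$. The lower bound is immediate: if $V=\{1\}$ the graph is a single vertex ($t=1$), and otherwise $G$ is a tree on at least two vertices, which always has at least two leaves; so $t\geq 1$ in every case.

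For realizability I would produce, for each $s$ with $1\leq s\leq n$, a set $B$ with $|B|=n$ whose graph has exactly $s$ sources and hence $t=s+1$ leaves (the value $t=1$ being the degenerate single vertex $B=\{1\}$, possible only for $n=1$). The idea is to use one long "filler" chain of powers of $2$ together with several small primes as the remaining sources: take $B=\{2^{M-(n-s)},\dots,2^{M-1},2^{M}\}\cup\{p_1,\dots,p_{s-1}\}$, where $p_1,\dots,p_{s-1}$ are distinct odd primes below $2^{M}$ and $M$ is chosen large enough that such primes exist and $M-(n-s)\geq 1$. Each $p_i$ is automatically a source, since $\phi(y)$ is even for $y\geq 3$ and equals $1$ otherwise, so an odd prime is never covered; the top power $2^{M}$ is a source because it is the maximum of $V$, so no element can map onto it (as $\phi(x)<x$); and every lower power of $2$ is hit inside its own chain and so contributes no leaf. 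Then $V=\{1,2,4,\dots,2^{M}\}\cup(\text{the prime chains})$, the sources are exactly $\{2^{M},p_1,\dots,p_{s-1}\}$, and together with the leaf $1$ this yields $t=s+1$, while the two blocks are disjoint (even versus odd) so that $|B|=(n-s+1)+(s-1)=n$.

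The step I expect to be the main obstacle is keeping the source count \emph{exact} in this construction: the danger is that a prime's iteration chain could descend through and thereby cover the top filler vertex, destroying a source, or that two chains merge in an unforeseen way and change the count. Choosing the filler top to be the global maximum $2^{M}$ (so nothing in $V$ is large enough to map onto it) and keeping every prime below $2^{M}$ is exactly what neutralises this interference. The remaining verifications — that the lower powers and all non-top chain values are non-sources, that merging of chains near $1$ affects no leaf, and that enough odd primes lie below $2^{M}$ (guaranteed since $\pi(2^{M})\to\infty$) — are then routine.
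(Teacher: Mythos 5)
Your proposal is correct, and its overall strategy --- odd primes as unavoidable leaves, iterated-$\phi$ values as filler, the vertex $1$ as the single extra leaf --- is the same as the paper's; the substantive differences are in rigor and at the endpoints of the range. Your key lemma (a vertex $v\neq 1$ is a leaf if and only if it is a source, i.e.\ no $y\in V$ has $\phi(y)=v$, and every source must lie in $A$, since $v=\phi^{k}(a)$ with $k\geq 1$ exhibits the preimage $\phi^{k-1}(a)\in V$) turns the bound $t\leq n+1$ into an actual proof; the paper instead argues that ``in the worst case $A$ consists of odd primes'' and asserts that $\{p,p-1\}$ are leaves, which is false for $p-1$ (it generally has degree at least $2$), and its lower-bound witness $A=\{2\}$ in fact produces two leaves, not one. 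For realizability, the paper puts $1$ into $B$ together with $t-1$ large odd primes and $n-t$ elements of their $\phi$-chains, which as written only covers $1\leq t\leq n$ and is vacuous at $t=1$; you instead cap a filler chain of powers of $2$ with the source $2^{M}$ (the maximum of $V$, hence never hit because $\phi(x)<x$), which realizes every value $t\in\{2,\dots,n+1\}$ --- including $t=n+1$, which the paper's construction cannot reach, since it would need $n-t=-1$ filler elements --- and you correctly observe that $t=1$ is impossible for $n\geq 2$, as a tree on at least two vertices has at least two leaves. In short, your route proves a sharper and correctly quantified version of the statement, at the cost of a slightly longer construction; the paper's version buys brevity but is heuristic in the upper bound and imprecise at both ends of the range.
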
 
	
	\begin{proof}
	Suppose that $A$ is a set of size $n$. In the worst case, the set $A$ consists of  odd prime numbers. Suppose that $p$ is a prime number. So $\phi(p)=p-1$ which is an even number and  so there is no edge between any numbers in $A$. Therefore $\{p,p-1\}$ are  leaves   of the graph. Since 1 is in $G_{\phi}(A)$, so the graph has at most $n+1$ leaves. When $A=\{2\}$, then the graph has only one leaf. To prove the second part, consider the odd prime numbers $p_1,p_2,\ldots,p_{t-1}$ with 1 in $B$ and add $n-t$ members of the set 
	$$\{\phi(p_1),\phi^2(p_1),\phi^3(p_1),\ldots,\phi(p_2),\phi^2(p_2),\phi^3(p_2),\ldots,\phi(p_{t-1}),\phi^2(p_{t-1}),\phi^3(p_{t-1}),\ldots,2\}$$ to $B$, and for being confident about the size of this set, it is suffices to consider enough large  prime numbers.
	\qed
	\end{proof}

\medskip 	

Here we state the following example (regarding to the proof of the Theorem \ref{thm2}). 
	
	\begin{example}\label{ex2}
    Let $n=5$ and $t=3$. Consider a set $A$ containing three odd prime number  $A=\{1,7,11\}$ and add two numbers $2$ and $4$ from $\{\phi(11)=10,\phi^2(11)=4,\phi(7)=6,\phi^2(7)=2\}$ to it. So $B=\{1,2,4,7,11\}$. As we see in Figure \ref{2}, $G_{\phi}(B)$ has $3$ leaves with $|B|=5$ as desired.
    
    \begin{figure}
    \begin{center}
    \psscalebox{0.8 0.8}
      {
\begin{pspicture}(0,-5.383558)(8.394231,-2.9564424)
\psdots[linecolor=black, dotsize=0.4](0.19711548,-3.5864422)
\psdots[linecolor=black, dotsize=0.4](1.7971154,-3.5864422)
\psdots[linecolor=black, dotsize=0.4](3.3971155,-3.5864422)
\psdots[linecolor=black, dotsize=0.4](4.9971156,-3.5864422)
\psdots[linecolor=black, dotsize=0.4](6.5971155,-3.5864422)
\psdots[linecolor=black, dotsize=0.4](8.197116,-3.5864422)
\psdots[linecolor=black, dotsize=0.4](4.9971156,-5.1864424)
\psline[linecolor=black, linewidth=0.04](0.19711548,-3.5864422)(8.197116,-3.5864422)(4.9971156,-3.5864422)(4.9971156,-5.1864424)(4.9971156,-5.1864424)
\rput[bl](0.05711548,-3.2864423){11}
\rput[bl](1.6571155,-3.2264423){10}
\rput[bl](3.2771156,-3.2664423){4}
\rput[bl](4.9371157,-3.2464423){2}
\rput[bl](6.4571157,-3.2264423){6}
\rput[bl](8.077115,-3.2664423){7}
\rput[bl](5.3971157,-5.3264422){1}
\end{pspicture}
}
      \end{center}
      \caption{Graph related to  Example \ref{ex2}.} \label{2}
      \end{figure}
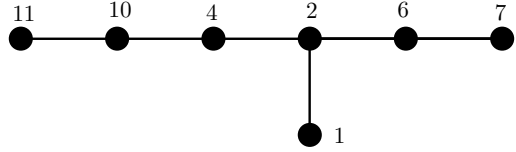

	\end{example}

	\begin{theorem} \label{star}
		The star graph $K_{1,n}$ cannot be a $G_{\phi}$ graph, for $n>4$.
	\end{theorem}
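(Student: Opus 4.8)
The plan is to exploit the tree structure from the first theorem together with the observation that $G_{\phi}(A)$ is naturally rooted at the vertex $1$, so that every vertex $v\neq 1$ has a unique ``parent'' $\phi(v)$ and a (possibly empty) set of ``children'' $\{u\in A_{\phi}:\phi(u)=v\}$. Since $A_{\phi}$ is closed under $\phi$, each $v\neq 1$ contributes the edge $\{v,\phi(v)\}$, so the degree of $v$ satisfies
\[
\deg(v)=\bigl|\{u\in A_{\phi}:\phi(u)=v\}\bigr|+[\,v\neq 1\,],
\]
where the last term records the edge to the parent. The first step is to note that $1$ is always essentially a leaf: the only solutions of $\phi(u)=1$ are $u\in\{1,2\}$, so $\deg(1)\leq 1$. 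Hence if $K_{1,n}\cong G_{\phi}(A)$ with $n>4$, the center $c$ (the unique vertex of degree $n\geq 5$) cannot equal $1$.

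Next I would analyse the parent of the center. Because $c\neq 1$, the vertex $c$ has a parent $\phi(c)\in A_{\phi}$, and this parent is one of the $n$ neighbours of $c$; since every vertex other than $c$ is a leaf of the star, $\phi(c)$ must have degree $1$. But $\phi(c)$ already has $c$ as a child. If $\phi(c)\neq 1$, then $\phi(c)$ would in addition have its own parent $\phi^{2}(c)$, and since $\phi^{2}(c)<\phi(c)<c$ these two neighbours are distinct, forcing $\deg(\phi(c))\geq 2$ --- contradicting that $\phi(c)$ is a leaf. Therefore $\phi(c)=1$, and as the only $u>1$ with $\phi(u)=1$ is $u=2$, we conclude $c=2$.

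The final step is a bounded count of preimages. With $c=2$, the children of the center are exactly the elements of $A_{\phi}$ solving $\phi(u)=2$, and the complete solution set of $\phi(u)=2$ is $\{3,4,6\}$. Hence $c=2$ has at most three children together with the single parent $1$, so $\deg(c)\leq 4$. This contradicts $\deg(c)=n>4$, completing the proof. The bound is sharp, which explains the hypothesis: taking $A=\{3,4,6\}$ gives $A_{\phi}=\{1,2,3,4,6\}$ and realises $K_{1,4}$, so $n>4$ cannot be relaxed.

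I do not expect a serious obstacle here; the only genuine idea is the second step, namely recognising that the parent of a star center must itself be a leaf, which can only happen at the root $1$ and thus pins the center to $c=2$. Everything else reduces to the elementary fact that $\phi(u)=2$ has exactly the three solutions $3,4,6$, bounding the maximal attainable star degree.
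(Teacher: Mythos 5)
Your proof is correct and follows essentially the same route as the paper: the vertex $1$ forces its unique neighbour $2$ to be the center of the star, and since $\phi(u)=2$ has exactly the three solutions $3,4,6$, the center has degree at most $4$, contradicting $n>4$. The only differences are cosmetic: you pin down the center $c=2$ via a parent/child argument (somewhat more careful than the paper's implicit use of the star structure, where $1$ being a leaf immediately makes its neighbour $2$ the center), while the paper spends most of its proof deriving the solution sets of $\phi(u)=1$ and $\phi(u)=2$ by factoring $n$ --- elementary facts that you assert without derivation.
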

	
	\begin{proof}
	First we show that the equation $\phi(n)=2$ has only three solutions. Suppose that $n=p_1^{t_1}p_2^{t_2}\ldots p_k^{t_k}$. Then $\phi(n)=2$ means that $$2=p_1^{t_1-1}(p_1-1)p_2^{t_2-1}(p_2-1)\ldots p_k^{t_k-1}(p_k-1).$$
	The only primes $p_i$ in the factorization of $n$ must be such that $p_i-1$ divides $2$. So the only primes possible in $n$ are $2$ and  $3$ and so we have 
	$n=2^{a}3^{b}$ where $a,b\geq 0$ and $2=2^{a-1}(1)3^{b-1}(2)$. If $b>1$ then $3|2$ which is a contradiction. So we have the following cases:
        \begin{enumerate} 
			\item[(i)] 
			If $b=1$, then  $n=2^{a}3$ and $\phi(n)= 2^{a-1}(2-1)3^{1-1}(2)$. If $a>1$ then $\phi(n)= 2^{a}\neq 2$. If $a=1$ then $\phi(n)=(2-1)(3-1)=2$ and we have a solution which is $n=6$. If $a=0$ then $\phi(n)=(3-1)=2$ and we have another solution which is $n=3$.
			\item[(ii)] 
			If $b=0$, then we have $n=2^{a}$ and $\phi(n)= 2^{a-1}=2$ which implies $a=2$. So we have another solution which is $n=4$.
		\end{enumerate}
	Therefore there is no $n\neq 3,4,6$ such that $\phi(n)=2$ and we have just three solutions. Now we try to construct a star graph $K_{1,n}$. One of the leaves should be $1$. By the same argument we did before, it is easy to see that there is only one number which satisfies the equation $\phi(n)=1$ and that is $2$. So the neighbour of $1$ is $2$. Since there are only three solution for the equation $\phi(n)=2$, we can not have more than $4$ adjacent vertices with $2$ and therefore there is no star graph for $n>4$.
	\qed
	\end{proof}

A banana tree, $B(n,m)$ is a graph obtained by connecting one leaf of each $n$ copies of an $m$-star graph to a new vertex. By Theorem \ref{star} we have the following result:

	\begin{corollary}
		The banana tree $B(n,m)$ cannot be a $G_{\phi}$-graph, for $m\geq 6$.
	\end{corollary}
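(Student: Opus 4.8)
The plan is to avoid analyzing the banana tree one star at a time and instead exploit the rigid position that the integers $1$ and $2$ must occupy in \emph{any} $G_\phi$-graph. The key observation is that the proof of Theorem \ref{star} really establishes two facts: $\phi(x)=1$ has the single solution $x=2$, and $\phi(x)=2$ has only the three solutions $x\in\{3,4,6\}$. Consequently, in every $G_\phi$-graph the vertex $1$ is a leaf whose unique neighbour is $2$, and $\deg(2)\le 4$ (its neighbours lie in $\{1,3,4,6\}$). Thus any $G_\phi$-graph contains a leaf, namely $1$, that is adjacent to a vertex of degree at most $4$. I would contrast this invariant with the leaf--neighbour structure of $B(n,m)$.

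First I would pin down the leaves of $B(n,m)$ and their neighbours. Taking the $m$-star to be $K_{1,m-1}$ (so that $m\ge 6$ matches the threshold $n>4$ of Theorem \ref{star}, since the centre has degree $m-1$), each copy contributes $m-2$ outer leaves together with one distinguished leaf joined to the apex. Hence each star centre has degree $m-1$, each distinguished leaf has degree $2$, and the apex has degree $n$. Therefore, for $n\ge 2$, the only degree-one vertices are the $n(m-2)$ outer leaves, and each of them is adjacent precisely to the centre of its own star, a vertex of degree $m-1\ge 5$. Now if $B(n,m)=G_\phi(A)$, then the vertex $1$ must be one of these outer leaves, so its neighbour $2$ is the corresponding star centre, forcing $\deg(2)=m-1\ge 5$. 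This contradicts $\deg(2)\le 4$, and the corollary follows.

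The step I expect to be the real obstacle is resisting the tempting but invalid shortcut of applying Theorem \ref{star} locally to each star: a copy of $K_{1,m-1}$ sitting as a subgraph is not the same as $K_{1,m-1}$ being a $G_\phi$-graph, so the theorem cannot be invoked directly on a single star. The correct bridge is the global argument above, which transports the theorem through the forced identification of the numbers $1$ and $2$. A second point requiring care is the degenerate case $n=1$: here the apex is itself a leaf, and in fact $B(1,6)$ \emph{is} realizable (for instance $A=\{5,8,10,12\}$ yields $A_\phi=\{1,2,4,5,8,10,12\}$ with centre $4$ of degree $5$), so the statement is genuinely restricted to the standard range $n\ge 2$, which I would state as an explicit hypothesis.
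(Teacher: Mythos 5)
Your proof is correct, and it actually supplies more than the paper does: the paper states this as an immediate consequence of Theorem \ref{star} with no written argument, implicitly treating the presence of a copy of $K_{1,m-1}$ with $m-1\geq 5$ as if non-realizability of that star transferred automatically to the whole tree. You are right to reject that shortcut --- being a $G_\phi$-graph is not a subgraph-hereditary property --- and to replace it with the global argument: since $\phi(x)=1$ has only the solution $x=2$, the vertex $1$ is a leaf whose unique neighbour is $2$; since $\phi(x)=2$ has only the solutions $3,4,6$, we get $\deg(2)\leq 4$; and in $B(n,m)$ with $n\geq 2$ every degree-one vertex is adjacent to a star centre of degree $m-1\geq 5$. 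This is precisely the technique the paper itself uses in the proof of Theorem \ref{hok} for coronas, so your route is best described as the rigorous filling of a gap the paper leaves open, rather than a different method.

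Your side observation is a genuine catch. For $n=1$ the root of $B(1,m)$ is itself a leaf, and your example works: with $A=\{5,8,10,12\}$ one gets $A_\phi=\{1,2,4,5,8,10,12\}$, and since $5,8,10,12$ are exactly the solutions of $\phi(x)=4$, the resulting graph is the centre $4$ of degree $5$ with leaves $5,8,10,12$ and the pendant path $4$--$2$--$1$, which is exactly $B(1,6)$ when an $m$-star means the star on $m$ vertices --- the reading under which the threshold $m\geq 6$ matches the threshold $n>4$ of Theorem \ref{star}. So the corollary as literally stated fails at $n=1$, $m=6$, and the hypothesis $n\geq 2$ you impose is not pedantry but necessary; the paper's statement should be read (or corrected) accordingly.
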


    \begin{remark}\label{remark1}
    The star graph $K_{1,n}$ is a  $G_{\phi}$ graph, for $n<4$. It suffices to consider  $\{1,2\}$ for $n=1$, $\{1,2,3\}$ for $n=2$ , $\{1,2,3,4\}$ for $n=3$ and $\{1,2,3,4,6\}$ for $n=4$.
    \end{remark}

    \medskip 
    The corona of two graphs $G_{1}$ and $G_{2}$ is the graph $G = G_{1}\circ G_{2}$ formed from one copy of $G_{1}$ and $\vert V(G_{1}) \vert$ copies of $G_{2}$, where the $i$th vertex of $G_{1}$ is adjacent to every vertex in the $i$th copy of $G_{2}$. The corona $G \circ K_{1}$, in particular, is the graph constructed from a copy of $G$, where for each vertex $v\in V(G)$, a new vertex $v'$ and a pendant edge $vv'$ are added ~\cite{A}.
    
  	\begin{theorem} \label{thmcent}
		The centipede graph $P_n\circ K_1$ is a $G_{\phi}$ graph, for $n>1$.
	\end{theorem}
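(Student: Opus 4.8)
The plan is to exhibit an explicit set $A$ whose associated graph $G_\phi(A)$ is the centipede. Recall that $P_n\circ K_1$ consists of a spine $s_1s_2\cdots s_n$ (a copy of $P_n$) together with one pendant leaf $\ell_i$ attached to each $s_i$, for a total of $2n$ vertices; its leaves are exactly $\ell_1,\dots,\ell_n$, the spine endpoints $s_1,s_n$ have degree $2$, and the remaining spine vertices have degree $3$. Since $G_\phi(A)$ is always a tree in which $1$ is a leaf adjacent to $2$ (the latter because $\phi(w)=1$ forces $w=2$, as observed in the proof of Theorem \ref{star}), I would realize the spine by the chain of powers of two and realize each leg by a suitable multiple of $3$.

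Concretely, I would take $A=\{3\cdot 2^i : 2\le i\le n\}$. Using that $\phi(2^k)=2^{k-1}$ and, by multiplicativity, $\phi(3\cdot 2^k)=\phi(3)\phi(2^k)=2\cdot 2^{k-1}=2^k$, the orbit of each $3\cdot 2^i$ under $\phi$ is $3\cdot 2^i\to 2^i\to 2^{i-1}\to\cdots\to 2\to 1$. Hence
\[
A_\phi=\{2^j : 0\le j\le n\}\cup\{3\cdot 2^i : 2\le i\le n\},
\]
which has exactly $(n+1)+(n-1)=2n$ vertices, the correct count. I would then identify the spine as $s_i=2^i$ for $1\le i\le n$: the relations $\phi(2^{i+1})=2^i$ give precisely the path $2-4-\cdots-2^n$. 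The leg of $s_1=2$ is $\ell_1=1$, forced by $\phi(2)=1$, while for $2\le i\le n$ the leg is $\ell_i=3\cdot 2^i$, since $\phi(3\cdot 2^i)=2^i=s_i$.

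It then remains to check that these are the only edges and that each spine vertex carries exactly one leg. The key computation is to determine, for each $2^i$, which elements of $A_\phi$ map to it under $\phi$: one verifies that among the vertices of $A_\phi$ only $2^{i+1}$ (when $i<n$) and $3\cdot 2^i$ satisfy $\phi(w)=2^i$, and that no vertex of $A_\phi$ maps onto any $3\cdot 2^i$, so that every leg is indeed a leaf. This yields that the spine endpoints $2$ and $2^n$ have degree $2$ and the interior spine vertices degree $3$, confirming $G_\phi(A)=P_n\circ K_1$.

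The main obstacle is precisely this last verification: ruling out superfluous edges and guaranteeing that each spine vertex receives exactly one pendant. Concretely one must ensure that the other solutions of $\phi(w)=2^i$ (such as $3,6,5,10$, or $2^{i+1}$) do not appear in $A_\phi$, which is why $A$ is chosen to involve no odd prime beyond the single factor $3$ in the prescribed pattern; a short argument from the multiplicativity of $\phi$ together with the explicit description of $A_\phi$ settles it. The two boundary vertices (the endpoint $2$, whose unique leg is $1$, and the top endpoint $2^n$) should be examined separately to confirm they have degree $2$, and the smallest case $n=2$ (where $G_\phi(\{12\})=P_4=P_2\circ K_1$) can be checked directly as a sanity test.
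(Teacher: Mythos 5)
Your construction is essentially the paper's own: the paper takes $A=\{1,2,4,\ldots,2^{n},12,24,48,\ldots,3(2^{n})\}$, whose closure $A_\phi$ coincides exactly with the $A_\phi$ generated by your smaller set $\{3\cdot 2^i : 2\le i\le n\}$, so both yield the identical centipede with spine $2,4,\ldots,2^n$ and legs $1,12,24,\ldots,3\cdot 2^n$. Your extra verification that no superfluous edges arise (every $\phi$-image of a vertex of $A_\phi$ is a power of $2$, and the only preimages of $2^i$ inside $A_\phi$ are $2^{i+1}$ and $3\cdot 2^i$) is correct and in fact more complete than the paper's argument, which simply points to a figure.
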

	
	\begin{proof}
    It suffices to consider  $A=\{1,2,4,\ldots,2^{n},12,24,48,\ldots,3(2^{n})\}$. So $G_{\phi}(A)$ is $P_n\circ K_1$ as shown in Figure \ref{cent}. Therefore $P_n\circ K_1$ is a $G_{\phi}$ graph.
	\qed
	\end{proof}

	\begin{figure}
	\begin{center}
\psscalebox{0.8 0.8} % Change this value to rescale the drawing.
{
\begin{pspicture}(0,-5.565)(10.947116,-2.415)
\psdots[linecolor=black, dotsize=0.4](0.19711548,-3.225)
\psdots[linecolor=black, dotsize=0.4](1.7971154,-3.225)
\psdots[linecolor=black, dotsize=0.4](3.3971155,-3.225)
\psdots[linecolor=black, dotsize=0.4](4.9971156,-3.225)
\psdots[linecolor=black, dotsize=0.4](6.5971155,-3.225)
\psdots[linecolor=black, dotsize=0.4](10.5971155,-3.225)
\psdots[linecolor=black, dotsize=0.4](10.5971155,-4.825)
\psdots[linecolor=black, dotsize=0.4](6.5971155,-4.825)
\psdots[linecolor=black, dotsize=0.4](4.9971156,-4.825)
\psdots[linecolor=black, dotsize=0.4](3.3971155,-4.825)
\psdots[linecolor=black, dotsize=0.4](1.7971154,-4.825)
\psdots[linecolor=black, dotsize=0.4](0.19711548,-4.825)
\psline[linecolor=black, linewidth=0.04](6.5971155,-3.225)(6.5971155,-4.825)
\psline[linecolor=black, linewidth=0.04](7.3971157,-3.225)(0.19711548,-3.225)(7.7971153,-3.225)(7.7971153,-3.225)(7.3971157,-3.225)
\psline[linecolor=black, linewidth=0.04](10.197116,-3.225)(9.397116,-3.225)(10.5971155,-3.225)(10.5971155,-4.825)(10.5971155,-4.825)
\psline[linecolor=black, linewidth=0.04](4.9971156,-3.225)(4.9971156,-4.825)(4.9971156,-4.825)
\psline[linecolor=black, linewidth=0.04](3.3971155,-3.225)(3.3971155,-4.825)(3.3971155,-4.825)
\psline[linecolor=black, linewidth=0.04](1.7971154,-3.225)(1.7971154,-4.825)(1.7971154,-4.825)
\psline[linecolor=black, linewidth=0.04](0.19711548,-3.225)(0.19711548,-4.825)(0.19711548,-4.825)
\psdots[linecolor=black, dotsize=0.1](8.197116,-3.225)
\psdots[linecolor=black, dotsize=0.1](8.5971155,-3.225)
\psdots[linecolor=black, dotsize=0.1](8.997115,-3.225)
\rput[bl](0.037115477,-5.325){1}
\rput[bl](0.05711548,-2.885){2}
\rput[bl](1.6771154,-2.905){4}
\rput[bl](3.2371154,-2.905){8}
\rput[bl](4.7571154,-2.925){16}
\rput[bl](6.4371157,-2.885){32}
\rput[bl](10.497115,-2.885){2}
\rput[bl](10.657116,-2.605){n}
\rput[bl](10.157116,-5.565){3(2   )}
\rput[bl](1.5571154,-5.385){12}
\rput[bl](3.1571155,-5.405){24}
\rput[bl](4.7771153,-5.385){48}
\rput[bl](6.4771156,-5.385){96}
\rput[bl](10.617115,-5.205){n}
\rput[bl](10.157116,-5.565){3(2     )}
\end{pspicture}
}
\end{center}
\caption{Graph related to  Theorem \ref{thmcent}} \label{cent}
\end{figure}
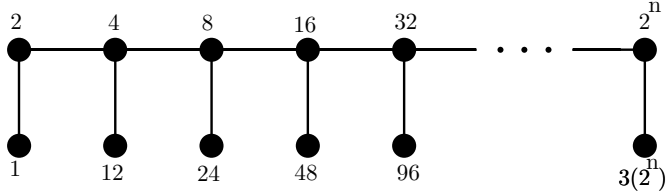

\begin{figure}
	\begin{center}
		\psscalebox{0.5 0.5} % Change this value to rescale the drawing.
		{
			\begin{pspicture}(0,-6.5014424)(12.794231,1.2956733)
			\psdots[linecolor=black, dotsize=0.4](10.197116,-3.3014421)
			\psdots[linecolor=black, dotsize=0.4](12.5971155,-2.1014423)
			\psdots[linecolor=black, dotsize=0.4](11.397116,-0.50144225)
			\psdots[linecolor=black, dotsize=0.4](12.5971155,-4.5014424)
			\psdots[linecolor=black, dotsize=0.4](11.397116,-6.1014423)
			\psdots[linecolor=black, dotsize=0.4](7.7971153,-3.3014421)
			\psline[linecolor=black, linewidth=0.08](7.7971153,-3.3014421)(10.197116,-3.3014421)(11.397116,-0.50144225)(11.397116,-0.50144225)
			\psline[linecolor=black, linewidth=0.08](10.197116,-3.3014421)(12.5971155,-2.1014423)(12.5971155,-2.1014423)
			\psline[linecolor=black, linewidth=0.08](10.197116,-3.3014421)(12.5971155,-4.5014424)(12.5971155,-4.5014424)
			\psline[linecolor=black, linewidth=0.08](10.197116,-3.3014421)(11.397116,-6.1014423)(11.397116,-6.1014423)
			\psellipse[linecolor=blue, linewidth=0.04, linestyle=dotted, dotsep=0.10583334cm, dimen=outer](6.5971155,-3.1014423)(5.2,3.4)
			\psdots[linecolor=black, dotsize=0.1](6.5971155,-3.3014421)
			\psdots[linecolor=black, dotsize=0.1](6.1971154,-3.3014421)
			\psdots[linecolor=black, dotsize=0.1](5.7971153,-3.3014421)
			\pscustom[linecolor=black, linewidth=0.04]
			{
				\newpath
				\moveto(3.7971156,-1.7014422)
			}
			\pscustom[linecolor=black, linewidth=0.04]
			{
				\newpath
				\moveto(5.7971153,-1.3014423)
			}
			\pscustom[linecolor=black, linewidth=0.04]
			{
				\newpath
				\moveto(5.7971153,-0.9014423)
			}
			\pscustom[linecolor=black, linewidth=0.04]
			{
				\newpath
				\moveto(5.7971153,-1.3014423)
			}
			\pscustom[linecolor=black, linewidth=0.04]
			{
				\newpath
				\moveto(-0.60288453,2.2985578)
			}
			\rput[bl](11.397116,-0.10144226){1}
			\rput[bl](9.797115,-2.9014423){2}
			\psdots[linecolor=black, dotsize=0.4](8.197116,1.0985577)
			\psdots[linecolor=black, dotsize=0.4](7.3971157,1.0985577)
			\psdots[linecolor=black, dotsize=0.4](6.5971155,1.0985577)
			\psdots[linecolor=black, dotsize=0.4](5.7971153,1.0985577)
			\psdots[linecolor=black, dotsize=0.4](0.59711546,-1.3014423)
			\psdots[linecolor=black, dotsize=0.4](0.19711548,-2.1014423)
			\psdots[linecolor=black, dotsize=0.4](0.19711548,-2.9014423)
			\psdots[linecolor=black, dotsize=0.4](0.59711546,-3.7014422)
			\rput[bl](4.1971154,-2.1014423){{\huge H}}
			\psdots[linecolor=black, dotsize=0.1](1.7971154,-5.301442)
			\psdots[linecolor=black, dotsize=0.1](2.1971154,-5.7014422)
			\psdots[linecolor=black, dotsize=0.1](2.5971155,-6.1014423)
			\psline[linecolor=black, linewidth=0.04](8.197116,1.0985577)(7.3971157,-0.9014423)(7.3971157,-0.9014423)
			\psline[linecolor=black, linewidth=0.04](7.3971157,1.0985577)(6.9971156,-0.9014423)(6.9971156,-0.9014423)
			\psline[linecolor=black, linewidth=0.04](6.5971155,1.0985577)(6.5971155,-0.9014423)(6.5971155,-0.9014423)
			\psline[linecolor=black, linewidth=0.04](5.7971153,1.0985577)(6.1971154,-0.9014423)(6.1971154,-0.9014423)
			\psline[linecolor=black, linewidth=0.04](0.59711546,-1.3014423)(2.1971154,-2.1014423)(2.1971154,-2.1014423)
			\psline[linecolor=black, linewidth=0.04](0.19711548,-2.1014423)(2.1971154,-2.5014422)(2.1971154,-2.5014422)
			\psline[linecolor=black, linewidth=0.04](0.19711548,-2.9014423)(2.1971154,-2.9014423)(2.1971154,-2.9014423)
			\psline[linecolor=black, linewidth=0.04](0.59711546,-3.7014422)(2.1971154,-3.3014421)(2.1971154,-3.3014421)
			\end{pspicture}
		}
	\end{center}
	\caption{Graph $ H\circ \overline{K_4}$ related to the proof of  Theorem \ref{hok}} \label{hok4}
\end{figure}
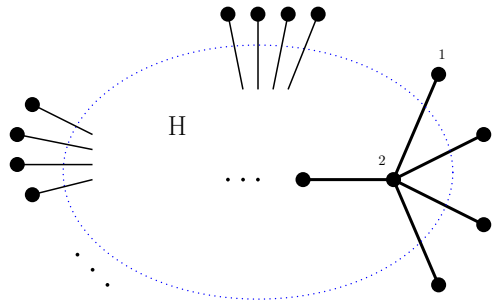

    \begin{theorem} \label{hok}
		If $H$ is a $G_\phi$-graph, then $ H\circ \overline{K_n}$ is not a $G_{\phi}$-graph, for $n>3$.
	\end{theorem}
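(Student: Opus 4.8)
The plan is to argue by contradiction with a degree count at the vertices that are forced to receive the labels $1$ and $2$. Suppose $H\circ\overline{K_n}$ is a $G_\phi$-graph for some $n>3$, realized as $G_\phi(A)$. First I would record two facts about any $G_\phi$-graph, both already established inside the proof of Theorem \ref{star}: the only solution of $\phi(r)=1$ with $r>1$ is $r=2$, so the vertex $1$ is always a leaf whose unique neighbour is $2$; and $\phi(r)=2$ has exactly the three solutions $3,4,6$, so the vertex labelled $2$ has at most three children and therefore $\deg(2)\le 4$ in the whole graph.

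Next I would locate the vertices carrying the labels $1$ and $2$ inside the corona. By construction every vertex of the base copy of $H$ is joined to all $n$ vertices of its attached copy of $\overline{K_n}$, so each such base vertex has degree at least $n\ge 4$; consequently the only vertices of degree $1$ in $H\circ\overline{K_n}$ are the pendant vertices coming from the copies of $\overline{K_n}$. Since the label $1$ must occupy a leaf, it is forced onto one of these pendants, say a pendant attached to the base vertex $u$. Because the unique neighbour of $1$ must be $2$, and the only neighbour of that pendant is $u$, the vertex $u$ is forced to carry the label $2$.

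Now I would count the degree of $u$. As a vertex of the base graph $H$ it is adjacent to all $n$ of its pendants together with its neighbours inside $H$, so $\deg(u)=n+\deg_H(u)$. Since $H$ is itself a $G_\phi$-graph it is a tree and hence connected, so if $H$ has at least two vertices then $\deg_H(u)\ge 1$ and therefore $\deg(u)\ge n+1\ge 5$. This contradicts $\deg(2)\le 4$ from the first step, so no labelling can exist; this is exactly the configuration drawn in Figure \ref{hok4}, where the vertex $2$ acquires degree $5$.

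The step I expect to be the most delicate is the boundary of the argument, not the count itself. The inequality $\deg(u)\ge n+1$ uses $\deg_H(u)\ge 1$, which fails precisely when $H=K_1$; in that case $H\circ\overline{K_n}=K_{1,n}$, and by Theorem \ref{star} together with Remark \ref{remark1} this is a $G_\phi$-graph exactly when $n\le 4$. Thus the clean contradiction above covers every $H$ with at least two vertices for all $n>3$, while for the degenerate base $H=K_1$ one must invoke the star classification, which forces the threshold up to $n>4$. I would therefore state and prove the result for a nontrivial base graph $H$ (as pictured), and flag the single exceptional pair $H=K_1,\ n=4$ separately through Theorem \ref{star}.
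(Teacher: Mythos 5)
Your argument is correct and is essentially the paper's own: force the label $1$ onto a pendant vertex, force its base neighbour $u$ to carry the label $2$, and then contradict the fact that $\phi(x)=2$ has only the three solutions $3,4,6$ (so $\deg(2)\le 4$) by a degree count at $u$. The one place you depart from the paper is the boundary analysis, and there your version is strictly more careful. The paper asserts that ``the vertex $2$ has at least five neighbours,'' which is exactly your inequality $\deg(u)=n+\deg_H(u)\ge n+1$; this silently assumes $\deg_H(u)\ge 1$, i.e.\ that $H$ has at least two vertices, an assumption not present in the statement. As you note, it cannot be dropped: $K_1$ is a $G_\phi$-graph (take $A=\{1\}$, so $A_\phi=\{1\}$ with no edges), and $K_1\circ\overline{K_4}=K_{1,4}$ is also a $G_\phi$-graph by Remark \ref{remark1}, so the theorem as printed fails for the pair $H=K_1$, $n=4$, and the paper's proof breaks at precisely this point. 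Your resolution --- prove the claim for every base $H$ with at least two vertices and all $n>3$, and dispose of $H=K_1$ separately via Theorem \ref{star}, which pushes its threshold to $n>4$ --- is the correct repair; it also replaces the paper's closing hand-wave ``by the same argument we conclude \ldots for $n>4$,'' since your count $n+1\ge 5$ works uniformly for all $n\ge 4$.
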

	
	\begin{proof}
    Suppose that $H$ is a $G_\phi$-graph and without loss of generality let $n=4$. We consider graph $ H\circ \overline{K_4}$. If $ H\circ \overline{K_4}$ is a  $G_\phi$-graph, then one of its leaves should be $1$ and its neighbour should be $2$ (see Figure \ref{hok4}). Then the vertex $2$ has at least five  neighbours which one of them is $1$. On the other hand the equation $\phi(x)=2$ has only three solutions which are $3,4$ and $6$. So by assigning these three numbers to the three leaves adjacent to vertex $2$, at least  one vertex left without assigning any number. So there is no set of natural numbers such that  $ H\circ \overline{K_4}$ be a $G_{\phi}$-graph. By the same argument we conclude that $ H\circ \overline{K_n}$ is not a $G_{\phi}$-graph, for $n>4$.
	\qed
	\end{proof}

   \begin{theorem}\label{leaves}
		If  $H$ is  a $G_\phi$-graph with $m$ leaves and $G_{\phi}(A)=H$, for a set $A$, then  $|A|\geq m-1$.
	\end{theorem}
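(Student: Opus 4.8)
The plan is to pin down the $m$ leaves of $H=G_{\phi}(A)$ and to show that all but one of them must be elements of the generating set $A$ itself. Recall that $H$ is a tree, hence connected, and that $1\in A_{\phi}$ whenever $A\neq\emptyset$, since every iteration sequence $\{n,\phi(n),\phi^2(n),\dots\}$ eventually reaches $1$. I may assume $|V(H)|\geq 2$, for otherwise $m\leq 1$ and the inequality $|A|\geq m-1$ is trivial.

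First I would analyse the neighbours of a vertex $v\in V=A_{\phi}$. Because $\phi(v)<v$ for every $v>1$, each such $v$ has exactly one neighbour lying below it, namely $\phi(v)$ (which always belongs to $A_{\phi}$), while all of its remaining neighbours are \emph{preimages}, that is, vertices $w\in V$ with $\phi(w)=v$. Hence for $v>1$ one has $\deg(v)=1+|\{w\in V:\phi(w)=v\}|$, so $v$ is a leaf precisely when no vertex of $V$ maps to it. For the vertex $1$, the only solution of $\phi(w)=1$ is $w=2$ (as established in the proof of Theorem \ref{star}); since $|V|\geq 2$ forces $2\in V$, the vertex $1$ has degree $1$ and is itself a leaf, but it is not a preimage-free vertex.

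The $m$ leaves therefore split into the single vertex $1$ together with $m-1$ leaves $v>1$ that have no preimage in $V$; call the latter \emph{source leaves}. The crux is to show that every source leaf belongs to $A$. Indeed, if $v$ is a source leaf then, since $v\in A_{\phi}$, we may write $v=\phi^{k}(n)$ for some $n\in A$ and $k\geq 0$; were $k\geq 1$, the number $\phi^{k-1}(n)$ would lie in $A_{\phi}=V$ and satisfy $\phi(\phi^{k-1}(n))=v$, contradicting that $v$ has no preimage in $V$. Thus $k=0$ and $v=n\in A$. As the $m-1$ source leaves are distinct, $A$ contains at least $m-1$ distinct numbers, giving $|A|\geq m-1$.

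I do not anticipate a genuine obstacle: the argument is structural rather than computational. The only points demanding care are the degenerate case $|V(H)|=1$ and the verification that $1$ is always a leaf (so that exactly $m-1$ leaves remain as source leaves), and both are settled by the elementary count that $\phi(x)=1$ has the unique solution $x=2$.
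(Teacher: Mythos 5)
Your proof is correct and follows essentially the same approach as the paper's: the paper's (much terser) argument simply asserts that every leaf other than $1$ must belong to $A$, which is exactly the claim you establish rigorously via the observation that a leaf $v>1$ has no $\phi$-preimage in $V$ and hence can only enter $A_{\phi}$ as an element of $A$ itself. Your treatment of the degenerate case and the verification that $1$ is a leaf are details the paper leaves implicit, but the underlying idea is identical.
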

	
	\begin{proof}
    By the definition, except the leaf $1$, we need all the leaves  in the set $A$. So we have $|A|\geq m-1$.
	\qed
	\end{proof}

    \begin{remark}
    The lower bound in the Theorem \ref{leaves} is sharp. It suffices to consider examples in Remark \ref{remark1}.
    \end{remark}

\section{Some specific chemical trees as $G_{\phi}$-graph} 
    
     In chemical graphs, the vertices of the graph correspond to the atoms of the molecule, and the edges represent the chemical bonds. In this section, we consider some well known molecules and investigate them as $G_{\phi}$-graph. We start with the following easy theorem:

    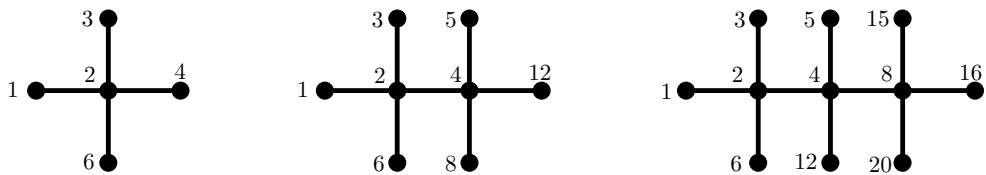
\begin{figure}
    	\begin{center}
    		\psscalebox{0.8 0.8} % Change this value to rescale the drawing.
    		{
    			\begin{pspicture}(0,-5.205)(16.227837,-2.495)
    			\psdots[linecolor=black, dotsize=0.3](0.48,-3.845)
    			\psdots[linecolor=black, dotsize=0.3](1.68,-3.845)
    			\psdots[linecolor=black, dotsize=0.3](2.88,-3.845)
    			\psdots[linecolor=black, dotsize=0.3](1.68,-2.645)
    			\psdots[linecolor=black, dotsize=0.3](1.68,-5.045)
    			\psdots[linecolor=black, dotsize=0.3](5.28,-3.845)
    			\psdots[linecolor=black, dotsize=0.3](6.48,-3.845)
    			\psdots[linecolor=black, dotsize=0.3](7.68,-3.845)
    			\psdots[linecolor=black, dotsize=0.3](8.88,-3.845)
    			\psdots[linecolor=black, dotsize=0.3](6.48,-2.645)
    			\psdots[linecolor=black, dotsize=0.3](7.68,-2.645)
    			\psdots[linecolor=black, dotsize=0.3](6.48,-5.045)
    			\psdots[linecolor=black, dotsize=0.3](7.68,-5.045)
    			\psdots[linecolor=black, dotsize=0.3](11.28,-3.845)
    			\psdots[linecolor=black, dotsize=0.3](12.48,-3.845)
    			\psdots[linecolor=black, dotsize=0.3](13.68,-3.845)
    			\psdots[linecolor=black, dotsize=0.3](14.88,-3.845)
    			\psdots[linecolor=black, dotsize=0.3](16.08,-3.845)
    			\psdots[linecolor=black, dotsize=0.3](12.48,-5.045)
    			\psdots[linecolor=black, dotsize=0.3](13.68,-5.045)
    			\psdots[linecolor=black, dotsize=0.3](14.88,-5.045)
    			\psdots[linecolor=black, dotsize=0.3](12.48,-2.645)
    			\psdots[linecolor=black, dotsize=0.3](13.68,-2.645)
    			\psdots[linecolor=black, dotsize=0.3](14.88,-2.645)
    			\psline[linecolor=black, linewidth=0.08](2.88,-3.845)(2.88,-3.845)(0.48,-3.845)(0.48,-3.845)
    			\psline[linecolor=black, linewidth=0.08](5.28,-3.845)(8.88,-3.845)(8.88,-3.845)
    			\psline[linecolor=black, linewidth=0.08](11.28,-3.845)(16.08,-3.845)(16.08,-3.845)
    			\psline[linecolor=black, linewidth=0.08](1.68,-2.645)(1.68,-5.045)(1.68,-5.045)
    			\psline[linecolor=black, linewidth=0.08](6.48,-2.645)(6.48,-5.045)(6.48,-5.045)
    			\psline[linecolor=black, linewidth=0.08](7.68,-2.645)(7.68,-5.045)(7.68,-5.045)
    			\psline[linecolor=black, linewidth=0.08](12.48,-2.645)(12.48,-5.045)(12.48,-5.045)
    			\psline[linecolor=black, linewidth=0.08](13.68,-2.645)(13.68,-5.045)(13.68,-5.045)
    			\psline[linecolor=black, linewidth=0.08](14.88,-2.645)(14.88,-5.045)(14.88,-5.045)
    			\rput[bl](0.0,-3.945){1}
    			\rput[bl](1.28,-3.705){2}
    			\rput[bl](1.24,-2.765){3}
    			\rput[bl](2.78,-3.645){4}
    			\rput[bl](1.26,-5.145){6}
    			\rput[bl](4.82,-3.945){1}
    			\rput[bl](6.1,-3.725){2}
    			\rput[bl](6.06,-2.785){3}
    			\rput[bl](7.36,-3.705){4}
    			\rput[bl](7.28,-2.785){5}
    			\rput[bl](6.08,-5.185){6}
    			\rput[bl](7.28,-5.185){8}
    			\rput[bl](8.66,-3.665){12}
    			\rput[bl](10.86,-3.985){1}
    			\rput[bl](12.04,-3.705){2}
    			\rput[bl](12.08,-2.785){3}
    			\rput[bl](13.32,-3.705){4}
    			\rput[bl](13.24,-2.785){5}
    			\rput[bl](12.02,-5.185){6}
    			\rput[bl](14.52,-3.685){8}
    			\rput[bl](13.08,-5.145){12}
    			\rput[bl](15.82,-3.665){16}
    			\rput[bl](14.32,-5.205){20}
    			\rput[bl](14.28,-2.765){15}
    			\end{pspicture}
    		}
    	\end{center}
    	\caption{Methane, Ethane and Propane respectively.} \label{chem1}
    \end{figure}

     \begin{theorem}
     	\begin{enumerate} 
     		\item[(i)] 
     		Methane, Ethane and Propane are $G_{\phi}$-graph. 
     		
     		\item[(ii)] Butane and Isobutane are $G_\phi$-graph.
     		
     		\item[(iii)] Pentane and Isopentane are $G_\phi$ graphs.
     		\end{enumerate} 
     	\end{theorem}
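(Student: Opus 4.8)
The plan is to prove all three parts uniformly by \emph{explicit construction}: for each molecule I exhibit a finite set $A\subseteq\mathbb{N}$, compute the relevant values of $\phi$, and check that the tree $G_{\phi}(A)$ has the right degree sequence and adjacencies to be isomorphic to the molecular graph (atoms as vertices, bonds as edges). For part (i) the three sets can simply be read off from Figure \ref{chem1}: Methane ($K_{1,4}$) is $G_{\phi}(A)$ with $A=\{3,4,6\}$, since $\phi(3)=\phi(4)=\phi(6)=2$ and $\phi(2)=1$ produce the star with centre $2$ and leaves $1,3,4,6$; Ethane arises from $A=\{3,5,6,8,12\}$ and Propane from $A=\{3,5,6,12,15,16,20\}$. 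In each case the degree-$4$ vertices turn out to be exactly the integers playing the role of carbon atoms, and one verifies by inspection that they form the correct carbon skeleton.

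The organising principle, which I would state once and reuse, is this. The tree $G_{\phi}(A)$ is rooted at $1$; vertex $2$ is forced to be the unique neighbour of $1$ (as $\phi(n)=1$ only for $n=2$), and the chain of powers of two $2,4,8,16,32,\dots$ is always available as a backbone because $\phi(2^{k})=2^{k-1}$. A carbon that must carry $h$ hydrogen atoms has to be realised by an integer $v$ possessing at least $h$ distinct $\phi$-preimages not otherwise used as carbons; since powers of two are rich in preimages ($\phi^{-1}(2)=\{3,4,6\}$, $\phi^{-1}(4)=\{5,8,10,12\}$, $\phi^{-1}(8)=\{15,16,20,24,30\}$, $\phi^{-1}(16)=\{17,32,34,40,48,60\}$, and so on), the straight chains are immediate. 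For Butane I would take the carbon backbone $2,4,8,16$ and attach hydrogens from these preimage sets, always reserving the next power of two to continue the chain; this gives $A=\{3,5,6,10,15,17,20,32,34\}$, whose degree-$4$ vertices $2,4,8,16$ form a path with end carbons carrying three hydrogens and interior carbons carrying two. Pentane is handled identically with backbone $2,4,8,16,32$, using $\phi^{-1}(32)$ to cap the chain.

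The genuinely delicate case --- and the step I expect to be the \textbf{main obstacle} --- is the branched isomers, where one carbon is bonded to three other carbons. Because every vertex of the $\phi$-tree has exactly one parent, at such a branch vertex $v$ one carbon-neighbour must be $\phi(v)$ and the other two must lie in $\phi^{-1}(v)$; worse, those two child-carbons must themselves have enough further preimages to host their own hydrogens. The obstruction is that many integers are infertile in this sense: every odd number exceeding $1$ has no $\phi$-preimage at all (as $\phi(n)$ is even for $n>2$), and numbers such as $10$ have too few. I would circumvent this by placing the branch vertex at $4$, whose preimage set $\{5,8,10,12\}$ contains the two fertile numbers $8$ and $12$, each having at least three preimages of its own. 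Thus for Isobutane the central carbon is $4$ with carbon-neighbours $2,8,12$, realised by $A=\{3,5,6,13,15,16,20,21,26\}$ (degree-$4$ vertices $2,4,8,12$, with $4$ adjacent to the other three and each of $2,8,12$ carrying three hydrogens). For Isopentane I would keep the same branch at $4$ but extend one arm, taking carbons $2,4,8,12,16$ with $4$ adjacent to $2,8,12$ and $8$ continued to $16$, realised by $A=\{3,5,6,13,15,17,20,21,26,32,34\}$. In every case the final, routine step is to confirm that the listed $\phi$-values yield precisely the claimed adjacencies and degrees, so that $G_{\phi}(A)$ is the desired chemical tree.
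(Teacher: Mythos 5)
Your proposal is correct and takes essentially the same approach as the paper: explicit construction of sets $A$ realizing each alkane, with powers of two $2,4,8,16,\dots$ as the carbon backbone and, for the branched isomers, the branch placed at $4$ with carbon neighbours $2,8,12$ --- precisely the skeletons in the paper's Figures \ref{chem1}--\ref{chem3}. The only deviations are immaterial choices of which $\phi$-preimages serve as hydrogens (e.g.\ you use $16,26$ on the carbons $8,12$ of Isobutane where the paper uses $24,28$), and all of your explicit sets verify correctly.
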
 
          \begin{proof} 
    \begin{enumerate} 
     			\item[(i)] 
    Methane is a $G_\phi$-graph with $A_\phi=\{ 3,4,6\}$, Ethane is a $G_\phi$-graph with $A_\phi=\{ 3,5,6,8,12\}$ and Propane is a $G_\phi$-graph with $A_\phi=\{ 3,5,6,12,15,16,20\}$ (see Figure \ref{chem1}).
    
    \item[(ii)] Butane is a $G_\phi$-graph with $A_\phi=\{ 3,5,6,12,15,16,20\}$ and Isobutane is a $G_\phi$-graph with $A_\phi=\{ 3,5,6,13,15,20,21,24,28\}$ as wee see in Figure \ref{chem2}.

    \item[(iii)]	Pentane is a $G_\phi$-graph with $A_\phi=\{ 3,5,6,12,15,17,20,48,64,80,96,\}$ and Isopentane is a $G_\phi$-graph with $A_\phi=\{3,5,6,12,13,15,20,28,32,40,48\}$ as wee see in Figure \ref{chem3}.\qed
\end{enumerate}
    \end{proof}

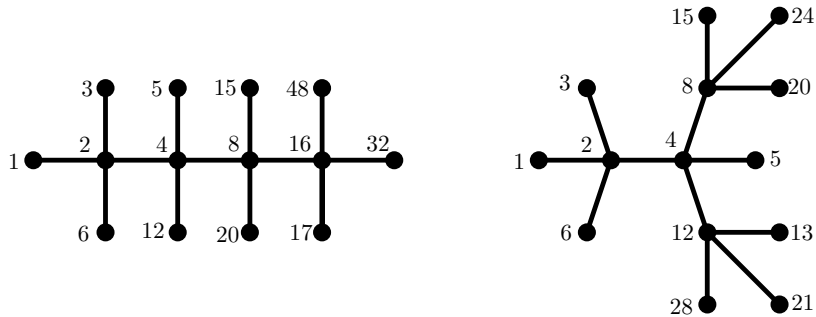
\begin{figure}
\begin{center}
\psscalebox{0.8 0.8} % Change this value to rescale the drawing.
{
\begin{pspicture}(0,-5.205)(13.41,-0.095)
\psdots[linecolor=black, dotsize=0.3](0.42,-2.645)
\psdots[linecolor=black, dotsize=0.3](1.62,-2.645)
\psdots[linecolor=black, dotsize=0.3](2.82,-2.645)
\psdots[linecolor=black, dotsize=0.3](4.02,-2.645)
\psdots[linecolor=black, dotsize=0.3](5.22,-2.645)
\psdots[linecolor=black, dotsize=0.3](1.62,-3.845)
\psdots[linecolor=black, dotsize=0.3](2.82,-3.845)
\psdots[linecolor=black, dotsize=0.3](4.02,-3.845)
\psdots[linecolor=black, dotsize=0.3](1.62,-1.445)
\psdots[linecolor=black, dotsize=0.3](2.82,-1.445)
\psdots[linecolor=black, dotsize=0.3](4.02,-1.445)
\psline[linecolor=black, linewidth=0.08](0.42,-2.645)(5.22,-2.645)(5.22,-2.645)
\psline[linecolor=black, linewidth=0.08](1.62,-1.445)(1.62,-3.845)(1.62,-3.845)
\psline[linecolor=black, linewidth=0.08](2.82,-1.445)(2.82,-3.845)(2.82,-3.845)
\psline[linecolor=black, linewidth=0.08](4.02,-1.445)(4.02,-3.845)(4.02,-3.845)
\rput[bl](0.0,-2.785){1}
\rput[bl](1.18,-2.505){2}
\rput[bl](1.22,-1.585){3}
\rput[bl](2.46,-2.505){4}
\rput[bl](2.38,-1.585){5}
\rput[bl](1.16,-3.985){6}
\rput[bl](3.66,-2.485){8}
\rput[bl](2.22,-3.945){12}
\rput[bl](4.66,-2.485){16}
\rput[bl](3.46,-4.005){20}
\rput[bl](3.42,-1.565){15}
\psdots[linecolor=black, dotsize=0.3](8.82,-2.645)
\psdots[linecolor=black, dotsize=0.3](10.02,-2.645)
\psdots[linecolor=black, dotsize=0.3](11.22,-2.645)
\psdots[linecolor=black, dotsize=0.3](12.42,-2.645)
\psdots[linecolor=black, dotsize=0.3](9.62,-3.845)
\psdots[linecolor=black, dotsize=0.3](11.62,-1.445)
\psdots[linecolor=black, dotsize=0.3](12.82,-1.445)
\psdots[linecolor=black, dotsize=0.3](12.82,-0.245)
\psdots[linecolor=black, dotsize=0.3](11.62,-0.245)
\psdots[linecolor=black, dotsize=0.3](11.62,-3.845)
\psdots[linecolor=black, dotsize=0.3](12.82,-3.845)
\psdots[linecolor=black, dotsize=0.3](11.62,-5.045)
\psdots[linecolor=black, dotsize=0.3](12.82,-5.045)
\psline[linecolor=black, linewidth=0.08](8.82,-2.645)(12.42,-2.645)(12.42,-2.645)
\psline[linecolor=black, linewidth=0.08](11.22,-2.645)(11.62,-1.445)(11.62,-0.245)(11.62,-0.245)
\psline[linecolor=black, linewidth=0.08](11.62,-1.445)(12.82,-0.245)(12.82,-0.245)
\psline[linecolor=black, linewidth=0.08](11.62,-1.445)(12.82,-1.445)(12.82,-1.445)
\psline[linecolor=black, linewidth=0.08](11.22,-2.645)(11.62,-3.845)(11.62,-3.845)
\psline[linecolor=black, linewidth=0.08](11.62,-3.845)(12.82,-3.845)(12.82,-3.845)
\psline[linecolor=black, linewidth=0.08](11.62,-3.845)(12.82,-5.045)(12.82,-5.045)
\psline[linecolor=black, linewidth=0.08](11.62,-3.845)(11.62,-5.045)(11.62,-5.045)
\psdots[linecolor=black, dotsize=0.3](9.62,-1.445)
\psline[linecolor=black, linewidth=0.08](9.62,-1.445)(10.02,-2.645)(9.62,-3.845)(9.62,-3.845)
\rput[bl](8.4,-2.785){1}
\rput[bl](9.52,-2.505){2}
\rput[bl](9.16,-1.485){3}
\rput[bl](10.92,-2.425){4}
\rput[bl](12.66,-2.745){5}
\rput[bl](9.18,-3.965){6}
\rput[bl](11.2,-1.525){8}
\rput[bl](11.02,-0.385){15}
\rput[bl](13.02,-0.365){24}
\rput[bl](12.96,-1.545){20}
\rput[bl](13.0,-3.965){13}
\rput[bl](13.02,-5.145){21}
\rput[bl](11.0,-5.205){28}
\rput[bl](11.02,-3.965){12}
\psdots[linecolor=black, dotsize=0.3](6.42,-2.645)
\psdots[linecolor=black, dotsize=0.3](5.22,-1.445)
\psdots[linecolor=black, dotsize=0.3](5.22,-3.845)
\psline[linecolor=black, linewidth=0.08](5.22,-1.445)(5.22,-3.845)(5.22,-2.645)(6.42,-2.645)(6.42,-2.645)
\rput[bl](4.62,-1.585){48}
\rput[bl](5.96,-2.485){32}
\rput[bl](4.68,-3.965){17}
\end{pspicture}
}
\end{center}
\caption{Butane and Isobutane respectively.} \label{chem2}
\end{figure}
   
\begin{figure}
	\begin{center}
		\psscalebox{0.7 0.7} % Change this value to rescale the drawing.
{
\begin{pspicture}(0,-4.246082)(15.367837,0.5417549)
\psdots[linecolor=black, dotsize=0.3](0.42,-2.8060818)
\psdots[linecolor=black, dotsize=0.3](1.62,-2.8060818)
\psdots[linecolor=black, dotsize=0.3](2.82,-2.8060818)
\psdots[linecolor=black, dotsize=0.3](4.02,-2.8060818)
\psdots[linecolor=black, dotsize=0.3](5.22,-2.8060818)
\psdots[linecolor=black, dotsize=0.3](1.62,-4.0060816)
\psdots[linecolor=black, dotsize=0.3](2.82,-4.0060816)
\psdots[linecolor=black, dotsize=0.3](4.02,-4.0060816)
\psdots[linecolor=black, dotsize=0.3](1.62,-1.6060817)
\psdots[linecolor=black, dotsize=0.3](2.82,-1.6060817)
\psdots[linecolor=black, dotsize=0.3](4.02,-1.6060817)
\psline[linecolor=black, linewidth=0.08](0.42,-2.8060818)(5.22,-2.8060818)(5.22,-2.8060818)
\psline[linecolor=black, linewidth=0.08](1.62,-1.6060817)(1.62,-4.0060816)(1.62,-4.0060816)
\psline[linecolor=black, linewidth=0.08](2.82,-1.6060817)(2.82,-4.0060816)(2.82,-4.0060816)
\psline[linecolor=black, linewidth=0.08](4.02,-1.6060817)(4.02,-4.0060816)(4.02,-4.0060816)
\rput[bl](0.0,-2.9460816){1}
\rput[bl](1.18,-2.6660817){2}
\rput[bl](1.22,-1.7460817){3}
\rput[bl](2.46,-2.6660817){4}
\rput[bl](2.38,-1.7460817){5}
\rput[bl](1.16,-4.146082){6}
\rput[bl](3.66,-2.6460817){8}
\rput[bl](2.22,-4.1060815){12}
\rput[bl](4.66,-2.6460817){16}
\rput[bl](3.46,-4.166082){20}
\rput[bl](3.42,-1.7260817){15}
\psdots[linecolor=black, dotsize=0.3](6.42,-2.8060818)
\psdots[linecolor=black, dotsize=0.3](5.22,-1.6060817)
\psdots[linecolor=black, dotsize=0.3](5.22,-4.0060816)
\psline[linecolor=black, linewidth=0.08](5.22,-1.6060817)(5.22,-4.0060816)(5.22,-2.8060818)(6.42,-2.8060818)(6.42,-2.8060818)
\rput[bl](4.62,-4.146082){48}
\rput[bl](5.92,-2.6460817){32}
\psdots[linecolor=black, dotsize=0.3](7.62,-2.8060818)
\psdots[linecolor=black, dotsize=0.3](6.42,-4.0060816)
\psdots[linecolor=black, dotsize=0.3](6.42,-1.6060817)
\psline[linecolor=black, linewidth=0.08](6.42,-1.6060817)(6.42,-4.0060816)(6.42,-4.0060816)
\rput[bl](7.12,-2.6660817){64}
\rput[bl](5.88,-4.1260815){96}
\rput[bl](5.86,-1.7260817){80}
\psline[linecolor=black, linewidth=0.08](6.42,-2.8060818)(7.62,-2.8060818)(7.62,-2.8060818)
\psdots[linecolor=black, dotsize=0.3](9.22,-2.8060818)
\psdots[linecolor=black, dotsize=0.3](10.42,-2.8060818)
\psdots[linecolor=black, dotsize=0.3](11.62,-2.8060818)
\psdots[linecolor=black, dotsize=0.3](12.82,-2.8060818)
\psdots[linecolor=black, dotsize=0.3](14.02,-2.8060818)
\psdots[linecolor=black, dotsize=0.3](10.42,-4.0060816)
\psdots[linecolor=black, dotsize=0.3](11.62,-4.0060816)
\psdots[linecolor=black, dotsize=0.3](12.82,-4.0060816)
\psdots[linecolor=black, dotsize=0.3](10.42,-1.6060817)
\psdots[linecolor=black, dotsize=0.3](11.62,-0.8060817)
\psdots[linecolor=black, dotsize=0.3](12.82,-1.6060817)
\psline[linecolor=black, linewidth=0.08](9.22,-2.8060818)(14.02,-2.8060818)(14.02,-2.8060818)
\psline[linecolor=black, linewidth=0.08](10.42,-1.6060817)(10.42,-4.0060816)(10.42,-4.0060816)
\psline[linecolor=black, linewidth=0.08](11.62,-1.6060817)(11.62,-4.0060816)(11.62,-4.0060816)
\psline[linecolor=black, linewidth=0.08](12.82,-1.6060817)(12.82,-4.0060816)(12.82,-4.0060816)
\rput[bl](8.8,-2.9460816){1}
\rput[bl](9.98,-2.6660817){2}
\rput[bl](10.02,-1.7460817){3}
\rput[bl](11.26,-2.6660817){4}
\rput[bl](9.96,-4.146082){6}
\rput[bl](12.46,-2.6460817){8}
\rput[bl](11.1,-0.6460817){12}
\rput[bl](13.46,-2.6460817){16}
\rput[bl](12.26,-4.166082){20}
\rput[bl](12.22,-1.7260817){15}
\psdots[linecolor=black, dotsize=0.3](15.22,-2.8060818)
\psdots[linecolor=black, dotsize=0.3](14.02,-1.6060817)
\psdots[linecolor=black, dotsize=0.3](14.02,-4.0060816)
\psline[linecolor=black, linewidth=0.08](14.02,-1.6060817)(14.02,-4.0060816)(14.02,-2.8060818)(15.22,-2.8060818)(15.22,-2.8060818)
\rput[bl](13.42,-4.146082){48}
\rput[bl](14.72,-2.6460817){32}
\rput[bl](4.62,-1.7060817){17}
\psdots[linecolor=black, dotsize=0.3](12.82,-0.8060817)
\psdots[linecolor=black, dotsize=0.3](10.42,-0.8060817)
\psdots[linecolor=black, dotsize=0.3](11.62,0.3939183)
\psline[linecolor=black, linewidth=0.08](11.62,0.3939183)(11.62,-2.0060816)(11.62,-2.0060816)
\psline[linecolor=black, linewidth=0.08](10.42,-0.8060817)(12.82,-0.8060817)(12.82,-0.8060817)
\rput[bl](11.22,-4.146082){5}
\rput[bl](11.04,0.23391831){13}
\rput[bl](9.84,-0.8860817){21}
\rput[bl](13.06,-0.9060817){28}
\rput[bl](13.54,-1.7060817){40}
\end{pspicture}
}
	\end{center}
	\caption{Pentane and Isopentane respectively.} \label{chem3}
\end{figure}
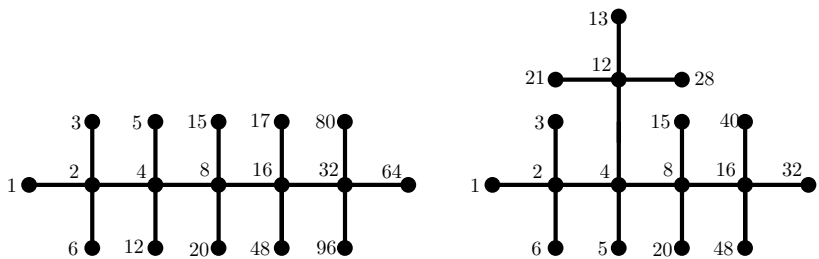

    \begin{theorem} \label{neop}
		Neopentane is not a $G_\phi$-graph.
	\end{theorem}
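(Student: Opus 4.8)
The plan is to exploit the rooted-tree structure that every $G_\phi$-graph carries. Recall from the proof of Theorem~\ref{star} that $\phi(x)=1$ forces $x=2$ and that $\phi(x)=2$ has exactly the three solutions $3,4,6$. Consequently, in any $G_\phi$-graph the vertex $1$ is a leaf whose unique neighbour is $2$. Viewing $G_\phi(A)$ as a tree rooted at $1$, each non-root vertex $r$ has exactly one neighbour closer to the root (its parent $\phi(r)$), while its remaining neighbours (its children) are distinct solutions of $\phi(x)=r$; hence a vertex carrying the label $w$ can have at most $|\phi^{-1}(w)|$ children, where $\phi^{-1}(w)$ denotes the set of solutions of $\phi(x)=w$.

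Next I would record the shape of neopentane. Its molecular graph consists of a central carbon of degree $4$, four ``methyl'' carbons each of degree $4$, and twelve hydrogens, each a leaf; the central carbon is adjacent only to the four methyl carbons, while every hydrogen is adjacent to a methyl carbon. Suppose for contradiction that neopentane equals $G_\phi(A)$ and root the tree at the vertex $1$. Since $1$ is a leaf it must be one of the hydrogens, so its neighbour is a methyl carbon and is forced to carry the label $2$. The central carbon is then a child of this vertex $2$, so its label lies in $\{3,4,6\}$. Being a carbon it has degree $4$, that is one parent (the vertex $2$) together with three children, so its label $w$ must satisfy $|\phi^{-1}(w)|\ge 3$. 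As $\phi(x)=3$ has no solution, the central carbon must be labelled $4$ or $6$.

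The remaining step is a finite preimage count, which is where the obstruction appears. The three children of the central carbon are again methyl carbons, each of degree $4$ and hence each needing three children of its own; thus each such child must carry a label $w$ with $|\phi^{-1}(w)|\ge 3$ while also solving $\phi(x)=(\text{central label})$. If the central label is $4$, its children come from $\phi^{-1}(4)=\{5,8,10,12\}$; here $\phi^{-1}(5)=\varnothing$ and $\phi^{-1}(10)=\{11,22\}$, so only $8$ and $12$ admit three children, leaving at most two usable labels for the three methyl carbons. If the central label is $6$, its children come from $\phi^{-1}(6)=\{7,9,14,18\}$; here $\phi^{-1}(7)=\phi^{-1}(9)=\phi^{-1}(14)=\varnothing$, so only $18$ is usable. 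In either case we cannot supply three admissible methyl-carbon labels, a contradiction, and hence neopentane is not a $G_\phi$-graph.

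The only genuine work is pinning down the rooted structure so that the degree-$4$ central carbon is forced to sit at distance two from $1$ carrying a label in $\{4,6\}$ and branching into three further degree-$4$ children; after that the result is a short verification that neither $\phi^{-1}(4)$ nor $\phi^{-1}(6)$ contains three values each having at least three $\phi$-preimages. The main obstacle is therefore conceptual rather than computational: recognizing that the second layer of methyl carbons must each be branching vertices, which is exactly what the preimage counts forbid.
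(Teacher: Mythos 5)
Your proof is correct and follows essentially the same route as the paper's: force the leaf $1$ and its neighbour $2$, deduce that the central carbon lies in $\phi^{-1}(2)=\{3,4,6\}$, and then rule out each case by counting how many elements of $\phi^{-1}(4)=\{5,8,10,12\}$ and $\phi^{-1}(6)=\{7,9,14,18\}$ have enough $\phi$-preimages to serve as the three branching methyl carbons. If anything, your write-up is slightly more complete, since you also note $\phi^{-1}(9)=\varnothing$ explicitly, whereas the paper only eliminates $7$ and $14$ (which already suffices, leaving two candidates for three slots).
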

	
	\begin{proof}
    Consider  Neopentane as see in Figure \ref{neop1}. The vertex labeled with $1$ should be one of the leaves  and the vertex labeled with $2$ should be its adjacent vertex. Now we consider  the vertex $u$. As  see in the proof of Theorem \ref{star}, the vertex $u$ should be $3$,  $4$ or $6$. By the similar argument in the proof of   Theorem \ref{star}, we conclude that the equation $\phi(n)=3$ has no solution. The numbers  $7,9,14,18$ are the solutions of $\phi(n)=6 $ and $5,8,10,12$ are the solutions of $\phi(n)=4$. So $u$ is not $3$. Let $u=6$. So we should choose three numbers of $7,9,14,18$ to give to $v,w$ and $x$. each of them has 3 neighbours. But $\phi(n)=14$ and $\phi(n)=7$ have no solutions. Thus $6$ is not suitable for $u$. By the same argument we conclude that $4$ is not suitable for $u$, too, since  $\phi(n)=10$ has only two solutions and $\phi(n)=5$ has no solutions. Therefore we can not give the vertex $u$ any number. Hence, Neopentane is not a $G_\phi$-graph.
	\qed
	\end{proof}

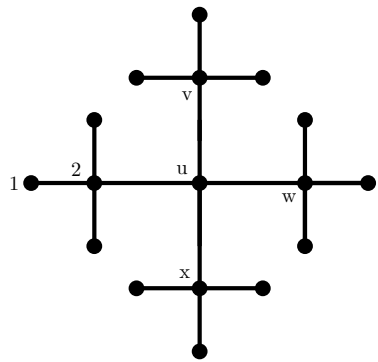
\begin{figure}
\begin{center}
\psscalebox{0.7 0.7}
{
\begin{pspicture}(0,-8.400001)(6.967837,-1.7043265)
\psdots[linecolor=black, dotsize=0.3](0.42,-5.052163)
\psdots[linecolor=black, dotsize=0.3](1.62,-5.052163)
\psdots[linecolor=black, dotsize=0.3](3.62,-5.052163)
\psdots[linecolor=black, dotsize=0.3](5.62,-5.052163)
\psdots[linecolor=black, dotsize=0.3](1.62,-6.252163)
\psdots[linecolor=black, dotsize=0.3](1.62,-3.852163)
\psdots[linecolor=black, dotsize=0.3](3.62,-3.0521631)
\psline[linecolor=black, linewidth=0.08](1.62,-3.852163)(1.62,-6.252163)(1.62,-6.252163)
\psline[linecolor=black, linewidth=0.08](3.62,-3.852163)(3.62,-6.252163)(3.62,-6.252163)
\rput[bl](0.0,-5.192163){1}
\rput[bl](1.18,-4.9121633){2}
\psdots[linecolor=black, dotsize=0.3](6.82,-5.052163)
\psdots[linecolor=black, dotsize=0.3](5.62,-3.852163)
\psdots[linecolor=black, dotsize=0.3](5.62,-6.252163)
\psline[linecolor=black, linewidth=0.08](5.62,-3.852163)(5.62,-6.252163)(5.62,-5.052163)(6.82,-5.052163)(6.82,-5.052163)
\psdots[linecolor=black, dotsize=0.3](4.82,-3.0521631)
\psdots[linecolor=black, dotsize=0.3](2.42,-3.0521631)
\psdots[linecolor=black, dotsize=0.3](3.62,-1.8521631)
\psline[linecolor=black, linewidth=0.08](3.62,-1.8521631)(3.62,-4.252163)(3.62,-4.252163)
\psline[linecolor=black, linewidth=0.08](2.42,-3.0521631)(4.82,-3.0521631)(4.82,-3.0521631)
\psdots[linecolor=black, dotsize=0.3](3.62,-7.052163)
\psdots[linecolor=black, dotsize=0.3](4.82,-7.052163)
\psdots[linecolor=black, dotsize=0.3](2.42,-7.052163)
\psdots[linecolor=black, dotsize=0.3](3.62,-8.252163)
\psline[linecolor=black, linewidth=0.08](0.42,-5.052163)(6.82,-5.052163)(6.82,-5.052163)
\psline[linecolor=black, linewidth=0.08](3.62,-5.052163)(3.62,-8.252163)(3.62,-8.252163)
\psline[linecolor=black, linewidth=0.08](2.42,-7.052163)(4.82,-7.052163)(4.82,-7.052163)
\rput[bl](3.18,-4.8721633){u}
\rput[bl](3.28,-3.4721632){v}
\rput[bl](5.18,-5.4121633){w}
\rput[bl](3.24,-6.8521633){x}
\end{pspicture}
}
\end{center}
\caption{Neopentane related to the proof of Theorem \ref{neop}.}\label{neop1}
\end{figure}

\begin{figure}
	\begin{center}
		\psscalebox{0.7 0.7} % Change this value to rescale the drawing.
		{
			\begin{pspicture}(0,-5.21)(13.93,-2.43)
			\psdots[linecolor=black, dotsize=0.3](0.42,-3.81)
			\psdots[linecolor=black, dotsize=0.3](1.62,-3.81)
			\psdots[linecolor=black, dotsize=0.3](2.82,-3.81)
			\psdots[linecolor=black, dotsize=0.3](4.02,-3.81)
			\psdots[linecolor=black, dotsize=0.3](5.22,-3.81)
			\psdots[linecolor=black, dotsize=0.3](1.62,-5.01)
			\psdots[linecolor=black, dotsize=0.3](2.82,-5.01)
			\psdots[linecolor=black, dotsize=0.3](4.02,-5.01)
			\psdots[linecolor=black, dotsize=0.3](1.62,-2.61)
			\psdots[linecolor=black, dotsize=0.3](2.82,-2.61)
			\psdots[linecolor=black, dotsize=0.3](4.02,-2.61)
			\psline[linecolor=black, linewidth=0.08](0.42,-3.81)(5.22,-3.81)(5.22,-3.81)
			\psline[linecolor=black, linewidth=0.08](1.62,-2.61)(1.62,-5.01)(1.62,-5.01)
			\psline[linecolor=black, linewidth=0.08](2.82,-2.61)(2.82,-5.01)(2.82,-5.01)
			\psline[linecolor=black, linewidth=0.08](4.02,-2.61)(4.02,-5.01)(4.02,-5.01)
			\rput[bl](0.0,-3.95){1}
			\rput[bl](1.18,-3.67){2}
			\rput[bl](1.22,-2.75){3}
			\rput[bl](2.46,-3.67){4}
			\rput[bl](2.38,-2.75){5}
			\rput[bl](1.16,-5.15){6}
			\rput[bl](3.66,-3.65){8}
			\rput[bl](2.22,-5.11){12}
			\rput[bl](4.66,-3.65){16}
			\rput[bl](3.46,-5.17){20}
			\rput[bl](3.42,-2.73){15}
			\psdots[linecolor=black, dotsize=0.3](6.42,-3.81)
			\psdots[linecolor=black, dotsize=0.3](5.22,-2.61)
			\psdots[linecolor=black, dotsize=0.3](5.22,-5.01)
			\psline[linecolor=black, linewidth=0.08](5.22,-2.61)(5.22,-5.01)(5.22,-3.81)(6.42,-3.81)(6.42,-3.81)
			\rput[bl](4.62,-5.15){48}
			\rput[bl](5.92,-3.65){32}
			\psdots[linecolor=black, dotsize=0.3](7.62,-3.81)
			\psdots[linecolor=black, dotsize=0.3](6.42,-5.01)
			\psdots[linecolor=black, dotsize=0.3](6.42,-2.61)
			\psdots[linecolor=black, dotsize=0.3](7.62,-2.61)
			\psdots[linecolor=black, dotsize=0.3](7.62,-5.01)
			\psdots[linecolor=black, dotsize=0.1](9.22,-3.81)
			\psdots[linecolor=black, dotsize=0.1](9.62,-3.81)
			\psdots[linecolor=black, dotsize=0.1](10.02,-3.81)
			\psline[linecolor=black, linewidth=0.08](6.42,-3.81)(8.82,-3.81)(7.62,-3.81)
			\psline[linecolor=black, linewidth=0.08](7.62,-2.61)(7.62,-5.01)(7.62,-5.01)
			\psline[linecolor=black, linewidth=0.08](6.42,-2.61)(6.42,-5.01)(6.42,-5.01)
			\psdots[linecolor=black, dotsize=0.3](11.62,-3.81)
			\psdots[linecolor=black, dotsize=0.3](11.62,-2.61)
			\psdots[linecolor=black, dotsize=0.3](11.62,-5.01)
			\psdots[linecolor=black, dotsize=0.3](12.82,-3.81)
			\psline[linecolor=black, linewidth=0.08](10.42,-3.81)(12.82,-3.81)(12.82,-3.81)
			\psline[linecolor=black, linewidth=0.08](11.62,-2.61)(11.62,-5.01)(11.62,-5.01)
			\rput[bl](7.12,-3.67){64}
			\rput[bl](13.16,-3.91){{$2^{n+1}$}}
			\rput[bl](11.74,-3.69){{$2^n$}}
			\rput[bl](5.88,-5.13){96}
			\rput[bl](5.86,-2.73){80}
			\rput[bl](6.9,-5.13){192}
			\rput[bl](6.96,-2.75){160}
			\rput[bl](4.64,-2.73){40}
			\rput[bl](10.3,-2.85){{$5(2^{n-1})$}}
			\rput[bl](10.6,-5.21){{$3(2^n)$}}
			\end{pspicture}
		}
	\end{center}
	\caption{ Alkanes $C_nH_{2n+2}$ for every $n>5$ related to the proof of Theorem \ref{Alkane}}\label{Alk-pic}
\end{figure}
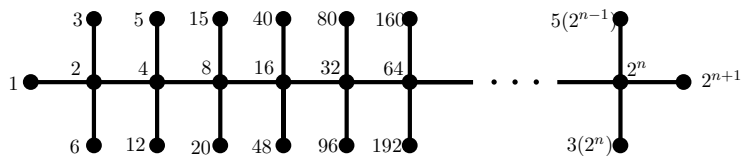

    \begin{theorem} \label{Alkane}
		The Alkanes $C_nH_{2n+2}$ are $G_\phi$-graphs.
	\end{theorem}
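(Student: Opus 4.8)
The plan is to realise the (straight-chain) alkane $C_nH_{2n+2}$ --- whose carbon skeleton is the path $P_n$, with three hydrogens on each of the two terminal carbons and two on each interior carbon --- by placing the carbons on the chain of powers of two. Concretely, I would take the $n$ carbons to be $2^1,2^2,\ldots,2^n$; since $\phi(2^{k})=2^{k-1}$ these automatically form the path $2-4-8-\cdots-2^{n}$, with the vertex $1=\phi(2)$ dangling at the bottom end. It then remains to hang the correct number of hydrogen-leaves on each carbon, i.e.\ to exhibit, for each carbon $2^k$, enough numbers whose $\phi$-value equals $2^k$. The cases $n\le 5$ are already settled by the preceding theorems, so one may read the following as the general construction illustrated in Figure \ref{Alk-pic}; in fact it works verbatim for every $n\ge 2$.

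First I would write down the generating set explicitly:
\[
A=\{3,6\}\ \cup\ \bigl\{\,3\cdot 2^{k},\ 5\cdot 2^{k-1}\ :\ 2\le k\le n-1\,\bigr\}\ \cup\ \{\,2^{n+1},\ 3\cdot 2^{n},\ 5\cdot 2^{n-1}\,\}.
\]
Using $\phi(3\cdot 2^{k})=2^{k}$, $\phi(5\cdot 2^{k-1})=2^{k}$ and $\phi(2^{k+1})=2^{k}$, I would check that iterating $\phi$ on any element of $A$ produces only that element itself, a power of two $\le 2^{n}$, and finally $1$; hence $A_\phi$ is exactly $\{1\}\cup\{2^{1},\ldots,2^{n}\}$ together with the listed hydrogens, and no unexpected vertex appears. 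The decisive observation, which I would isolate as the crux, is that \emph{every} element of $A_\phi$ has $\phi$-image a power of two. Consequently no $y\in A_\phi$ can satisfy $\phi(y)=3\cdot 2^{k}$ or $\phi(y)=5\cdot 2^{k-1}$, so each of these non-powers of two is forced to be a leaf; and comparing sizes at the very top ($\phi(y)\le 2^{n}<2^{n+1}$ for all $y\in A_\phi$) shows that $2^{n+1}$ is a leaf as well.

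Finally I would tally the neighbours of each carbon to confirm the graph is precisely the alkane. For an interior carbon $2^{k}$ with $2\le k\le n-1$ the neighbours inside $A_\phi$ are exactly $2^{k-1}$ (its $\phi$-image), the carbon $2^{k+1}$, and the two hydrogens $3\cdot 2^{k}$ and $5\cdot 2^{k-1}$ --- degree $4$, with two carbon-neighbours and two hydrogens. For the bottom carbon $2$ the neighbours are $1,3,6$ and the carbon $4$, and for the top carbon $2^{n}$ they are $2^{n-1}$ together with the three hydrogens $2^{n+1},\,3\cdot 2^{n},\,5\cdot 2^{n-1}$; both terminal carbons thus carry three hydrogens, and the hydrogen count totals $3+2(n-2)+3=2n+2$, as required. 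The step I expect to demand the most care is exactly this bookkeeping at the two ends: one must verify that the third hydrogen of the top carbon may legitimately be the power of two $2^{n+1}$ (which is admissible only because there is no carbon above $2^{n}$ to claim it as a chain-neighbour), and that at the bottom the three solutions $3,4,6$ of $\phi(x)=2$ are correctly split, with $4$ serving as the chain-neighbour, $3$ and $6$ as hydrogens, and $1=\phi(2)$ supplying the third hydrogen.
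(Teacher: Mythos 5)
Your proposal is correct and is essentially the paper's own construction: the paper likewise places the carbon chain on $2,4,\ldots,2^n$, caps the top carbon with the extra hydrogen $2^{n+1}$, and attaches hydrogens of the form $3\cdot 2^{k}$ and $5\cdot 2^{k-1}$, differing from yours only in choosing $5,12$ and $15,20$ (rather than your $10,12$ and $20,24$) as the hydrogens of the carbons $4$ and $8$, and in disposing of $n\le 5$ by citing the earlier figures. Your explicit verification that every element of $A_\phi$ has $\phi$-image a power of two (so the non-powers of two are forced to be leaves and no stray vertices or edges arise) tightens what the paper delegates to Figure~\ref{Alk-pic}, but it is not a different method.
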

	
	\begin{proof}
	For $n=1,2,3$ we have Methane, Ethane and Propane respectively (Figure \ref{chem1}). For $n=4$ we have Butane (Figure \ref{chem2}) and for $n=5$ we have Pentane (Figure \ref{chem3}). So for every $n>5$ it suffices to consider
	$$A=\{ 3,6,5,12,15,20,40,80,\ldots,5(2^{n-1}),48,96,\ldots,3(2^n),2^{n+1}\}.$$ 
	As we see in Figure \ref{Alk-pic}, $G_{\phi}(A)$ is $C_nH_{2n+2}$.
	\qed
	\end{proof}

A nanostructure is an object of middle of the road size among minute and atomic structures. This is approximately due to a physical measurement lesser than 100 nanometers, extending from groups of iotas to dimensional layers. Nanobiotechnology is a hurriedly boosting territory of logical and mechanical open door that applies the apparatuses and procedures of nanofabrication to fabricate gadgets for examining biosystems \cite{Nano}.

From a polymer science perspective, dendrimers are almost immaculate monodisperse macromolecules with a customary and exceptionally fanned three-dimensional design. The nanostar dendrimer is a piece of another gathering of macroparticles that seem, by all accounts, to be photon channels simply like counterfeit reception apparatuses. These macromolecules and all those more absolutely containing phosphorus are utilized in the development of nanotubes, smaller scale macrocapsules, nanolatex, shaded glasses, concoction sensors, and adjusted terminals \cite{Nano}. The graph $D_2$ in Figure \ref{nanostar} is the tree structure of a nanostar has grown 2 stages (see \cite{Saeid,Emeric}).  We state and prove the following result.

    \begin{theorem} \label{nano}
		Nanostar $D_2$ is a $G_\phi$-graph.
	\end{theorem}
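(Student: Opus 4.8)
The plan is to exhibit an explicit set $A$ of natural numbers whose iterated-$\phi$ closure $A_\phi$ realizes the tree $D_2$, exactly in the style of the construction for the alkanes in Theorem \ref{Alkane}. Recall that by the first theorem of Section 2 every $G_\phi(A)$ is automatically a tree, rooted (in the sense of the $\phi$-iteration) at the vertex $1$: each vertex $r\neq 1$ has the single parent $\phi(r)$, while its children are precisely those $x\in A_\phi$ with $\phi(x)=r$. Hence realizing $D_2$ amounts to assigning a natural number to every vertex of $D_2$ so that (a) the vertex playing the role of $1$ is labelled $1$ and its unique neighbour is $2$; (b) whenever $u$ lies one $\phi$-step farther from $1$ than its neighbour $v$, one has $\phi(\text{label}(u))=\text{label}(v)$; and (c) all assigned labels are distinct and $A_\phi$ contains no label outside this intended set.

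First I would read off the combinatorial structure of $D_2$ from Figure \ref{nanostar}: its centre, the number of branches emanating at each stage, and the resulting degree sequence. With this in hand I would build the labelling outward from the vertex $1$, exploiting the flexible families $\{2^k\}$, $\{3\cdot 2^k\}$ and $\{5\cdot 2^k\}$, for which $\phi(2^k)=2^{k-1}$, $\phi(3\cdot 2^k)=2^k$ and $\phi(5\cdot 2^k)=2^{k+1}$; these, together with the explicit preimage sets $\phi^{-1}(2)=\{3,4,6\}$ from Theorem \ref{star} and $\phi^{-1}(4)=\{5,8,10,12\}$ recorded in the proof of Theorem \ref{neop}, should supply enough branching room to match the modest degrees occurring in a two-stage dendrimer. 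At each internal vertex I would select as many distinct $\phi$-preimages of its label as that vertex has children, working stage by stage so that the labels grow as one moves away from $1$.

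The main obstacle is the same scarcity of $\phi$-preimages that kills the star $K_{1,n}$ for $n>4$ and Neopentane in Theorem \ref{neop}: an interior vertex of degree $d$ forces the equation $\phi(x)=m$ (where $m$ is its label) to have at least $d-1$ available solutions, and for certain $m$ — such as those demanding a solution of $\phi(x)=3$ or $\phi(x)=7$ — no solution exists at all. I would therefore need to verify, branch by branch, that every required preimage count can be met, paying particular attention to the core vertex and the stage-one branch vertices of $D_2$, whose labels $m$ must be chosen so that $\phi^{-1}(m)$ is large enough and its members remain unused elsewhere. Once a consistent labelling survives this check, the rest is routine: take $A$ to be the set of outermost labels (the deepest vertices of each branch) together with any isolated generators needed, and confirm that iterating $\phi$ on $A$ regenerates exactly the assigned labels and introduces no spurious vertices, so that $G_\phi(A)=D_2$.
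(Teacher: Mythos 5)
There is a genuine gap: your proposal is a plan for finding a proof, not a proof. For an existence statement of this kind the entire mathematical content is the explicit witness, and you never produce one --- you defer exactly the step (``verify, branch by branch, that every required preimage count can be met'') where all the work lies. The paper's proof is precisely such a witness: it takes $A=\{3,256,376,384,564\}$ and checks that the $\phi$-chains from these five numbers merge to form $D_2$, namely $256\to128\to\cdots\to2\to1$, with $384\to128$, $3\to2$, and the second long arm $376\to184\to88\to40\to16$ together with $564\to184$.

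Worse, the toolkit you propose cannot succeed even if the deferred verification were carried out. Every family you name has its $\phi$-image on the power-of-two spine: $\phi(2^k)=2^{k-1}$, $\phi(3\cdot2^k)=2^k$, $\phi(5\cdot2^k)=2^{k+1}$, and likewise all of $\phi^{-1}(2)=\{3,4,6\}$ and $\phi^{-1}(4)=\{5,8,10,12\}$ map into $\{2,4\}$. Hence any tree assembled from these numbers is a caterpillar: a path of powers of two with single pendant leaves attached. But $D_2$ is not a caterpillar --- its centre (label $16$ in the paper's realization) must carry, besides the arm $16$--$8$--$4$--$2$ toward the root and the arm $16$--$32$--$64$--$128$, a \emph{third} arm of length three that stays off the spine and ends in a degree-three vertex. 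The paper manufactures it from the chain $2^3\cdot5,\;2^3\cdot11,\;2^3\cdot23,\;2^3\cdot47$ (i.e.\ $40,88,184,376$), exploiting $\phi(2^3p)=2^2(p-1)$ along the prime chain $5,11,23,47$ in which each prime is $2p+1$ for the previous one, and then adds $564=2^2\cdot3\cdot47$ as the second leaf below $184$. Some mechanism of this sort --- iterated preimages that remain away from the spine for three consecutive steps --- is indispensable, and nothing in your proposal supplies it; this is the missing idea, not a routine check.
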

	
	\begin{proof}
	It suffices to consider $A=\{3,256,376,384,564\}$. As wee see in the Figure \ref{nanostar}, $D_2$ is a $G_\phi$-graph.
	\qed
	\end{proof}

We think that there is no dendrimer nanostar which has grown at least 4 stages, as a $G_\phi$-graph, but until now all attempts to
prove this failed. So, we end this paper by proposing the following conjecture. 

\begin{conjecture} 
There is no  dendrimer nanostar which has grown at least 4 stages, as a $G_\phi$-graph.
\end{conjecture}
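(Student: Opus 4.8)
The plan is to prove the conjecture by combining the rigid graded structure of $G_\phi$-trees with a propagation of the nontotient obstructions already used in Theorems \ref{star} and \ref{neop}, and to reduce the (infinite) family of dendrimers to a single base case. Throughout, fix any hypothetical realization $G_\phi(A)\cong D_k$ and root it at the forced leaf $1$: by the first theorem of Section~2 every vertex $v$ lies at distance $R(v)$ from $1$, so the labels strictly increase along every path away from $1$, and each vertex $v\neq 1$ has a unique parent $\phi(v)$ and children exactly the preimages of $v$ under $\phi$ that occur in $A_\phi$. Thus the degree of $v$ is at most $1+\#\phi^{-1}(v)$, and for the root it is at most $\#\phi^{-1}(v)$. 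The neighbour of $1$ is forced to be $2$, and the children of $2$ lie in $\{3,4,6\}$; moreover $3$ is odd and hence sterile ($\phi^{-1}(3)=\varnothing$), so $3$ is always a pendant. This already embeds the dendrimer's core into a very narrow number-theoretic corridor.

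First I would pin down the combinatorial skeleton of $D_k$: the core, the branching factor at each generation, and --- crucially --- the fact that growing to $k\ge 4$ stages forces at least four successive generations of branching vertices of degree $\ge 3$, i.e. a nested chain $v_1\prec v_2\prec v_3\prec v_4$ (in the rooted order) in which each $v_i$ must carry at least two fertile children. Second, I would establish a heredity lemma: since a subset of $A_\phi$ that is closed under $\phi$ induces again a $G_\phi$-graph, one can excise a generation-$4$ sub-dendrimer together with its descending path to $1$ and realize the corresponding $D_4$-type subtree; consequently it suffices to prove that $D_4$ itself is not a $G_\phi$-graph, after which non-realizability propagates to all $k\ge 4$.

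The heart of the argument is then a Neopentane-style propagation (as in Theorem \ref{neop}) carried through four nested levels. Starting from $2$, the only child that can itself branch is $4$ or $6$ (since $3$ is sterile), and the fertile grandchildren are again forced into a short list: e.g. below $6$ only $18$ is fertile, because $7,14$ are nontotients and $9$ is odd. The goal is a lemma of the form: \emph{in any rooted $G_\phi$-tree, a vertex and its first three successive descendants cannot all be branching of degree $\ge 3$}, proved by showing that at each step the available fertile preimages are too few to sustain the dendrimer's width. A promising quantitative handle is the $2$-adic valuation $v_2(\,\cdot\,)$: each edge to a prime preimage $v+1$ (the generic source of a high preimage count) lands on a sterile odd leaf, so sustained branching forces the labels into a thin family of highly $2$-divisible integers whose own preimage sets shrink --- a monovariant that I would try to show is exhausted before the fourth generation.

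The hard part --- and the reason the statement is only a conjecture --- is making this propagation \emph{uniform over all integer assignments}. The local obstructions (sterility of odd numbers and of nontotients, and the bounded multiplicity $\#\phi^{-1}(m)$ for small $m$) kill most embeddings, but for arbitrarily large target values $m$ the preimage count $\#\phi^{-1}(m)$ can be large, so no finite computation rules out a cleverly chosen deep labelling. Proving the impossibility therefore amounts to a genuine negative statement about totient preimage forests --- that the specific depth-$\ge 4$ dendrimer branching pattern cannot be embedded no matter how large the labels --- and I expect controlling these unbounded large-label configurations, rather than the small-case analysis, to be the decisive obstacle.
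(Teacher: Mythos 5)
First, a point of calibration: the paper contains no proof of this statement. The authors explicitly write that ``all attempts to prove this failed'' and pose it as an open conjecture, so there is no paper argument to compare yours against. Your closing paragraph honestly concedes that your plan is incomplete, and that concession is consistent with the paper's status. However, your sketch is not merely incomplete---its pivotal step is false, and it is refuted by the paper's own positive results.

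Your proposed key lemma, that in a rooted $G_\phi$-tree a vertex and its three successive descendants cannot all be branching vertices of degree $\geq 3$, is contradicted by Theorems \ref{thmcent} and \ref{Alkane}. In the alkane realization of Figure \ref{Alk-pic}, every spine vertex $2^k$ ($k\geq 2$) has degree exactly $4$: its parent $2^{k-1}$, its child $2^{k+1}$, and the two further fertile-looking children $3\cdot 2^{k}$ and $5\cdot 2^{k-1}$ (indeed $\phi(3\cdot 2^{k})=\phi(5\cdot 2^{k-1})=2^{k}$). So a $G_\phi$-graph can contain \emph{arbitrarily many} successive generations of degree-$4$ vertices, and your Neopentane-style propagation cannot terminate at depth four. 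Your proposed monovariant fails for the same reason: high $2$-divisibility does not shrink preimage sets but enlarges them---$\phi^{-1}(2^k)$ always contains $2^{k+1}$, $3\cdot 2^k$, and $5\cdot 2^{k-1}$, and more generally $|\phi^{-1}(m)|$ tends to grow for highly even $m$ (e.g.\ $\phi^{-1}(16)=\{17,32,34,40,48,60\}$, most of which are themselves totient values). The genuine obstruction for dendrimers, if one exists, must therefore be the \emph{exponential} demand of the $D_k$ pattern---each branch vertex needs at least two children that are themselves roots of full sub-dendrimers with arms of the prescribed length---not the mere existence of nested branching, which is cheap along the powers of $2$. A secondary flaw: your heredity reduction excises a sub-dendrimer together with its $\phi$-path down to $1$, which yields $D_4$ \emph{with a pendant path attached}, a different tree from $D_4$; so even granting everything else, non-realizability of $D_4$ alone would not propagate to $k\geq 5$ in the way you claim. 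In short, the conjecture remains open, and any successful attack must handle configurations that your local sterility/preimage-count analysis provably cannot exclude.
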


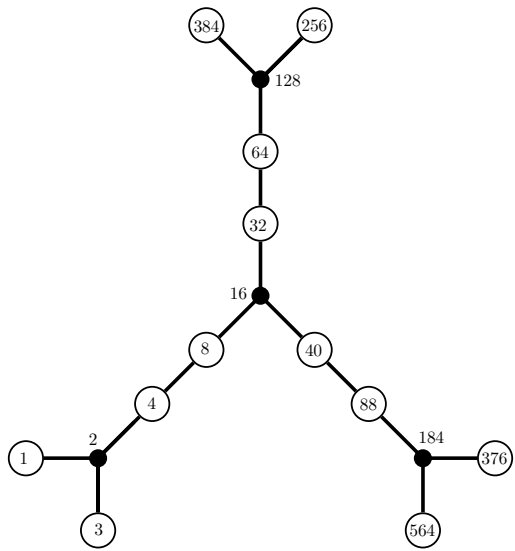
\begin{figure}
\begin{center}
\psscalebox{0.6 0.6} % Change this value to rescale the drawing.
{
\begin{pspicture}(0,-6.0000005)(11.205557,6.005556)
\psdots[linecolor=black, dotsize=0.4](5.6027775,-0.39722168)
\psdots[linecolor=black, fillstyle=solid, dotstyle=o, dotsize=0.8, fillcolor=white](5.6027775,1.2027783)
\psdots[linecolor=black, fillstyle=solid, dotstyle=o, dotsize=0.8, fillcolor=white](5.6027775,2.8027782)
\psdots[linecolor=black, dotsize=0.4](5.6027775,4.402778)
\psdots[linecolor=black, dotsize=0.4](9.202778,-3.9972217)
\psdots[linecolor=black, dotsize=0.4](2.0027778,-3.9972217)
\psdots[linecolor=black, fillstyle=solid, dotstyle=o, dotsize=0.8, fillcolor=white](10.802777,-3.9972217)
\psdots[linecolor=black, fillstyle=solid, dotstyle=o, dotsize=0.8, fillcolor=white](9.202778,-5.597222)
\psdots[linecolor=black, fillstyle=solid, dotstyle=o, dotsize=0.8, fillcolor=white](0.4027777,-3.9972217)
\psdots[linecolor=black, fillstyle=solid, dotstyle=o, dotsize=0.8, fillcolor=white](2.0027778,-5.597222)
\psline[linecolor=black, linewidth=0.08](0.8027777,-3.9972217)(2.0027778,-3.9972217)(2.0027778,-5.1972218)(2.0027778,-3.9972217)(2.8027778,-3.1972218)(2.8027778,-3.1972218)
\psline[linecolor=black, linewidth=0.08](9.202778,-3.9972217)(9.202778,-5.1972218)(9.202778,-5.1972218)
\psline[linecolor=black, linewidth=0.08](9.202778,-3.9972217)(10.402778,-3.9972217)(10.402778,-3.9972217)
\psline[linecolor=black, linewidth=0.08](5.6027775,-0.39722168)(5.6027775,0.8027783)(5.6027775,0.8027783)
\psline[linecolor=black, linewidth=0.08](5.6027775,1.6027783)(5.6027775,2.4027784)(5.6027775,2.4027784)
\psline[linecolor=black, linewidth=0.08](5.6027775,4.402778)(5.6027775,3.2027783)(5.6027775,3.2027783)
\rput[bl](0.26277772,-4.117222){1}
\rput[bl](1.8027778,-3.6972218){2}
\rput[bl](4.9227777,-0.4572217){16}
\rput[bl](5.3627777,1.0427784){32}
\rput[bl](5.4027777,2.6627784){64}
\rput[bl](5.9227777,4.2627783){128}
\rput[bl](1.9227777,-5.6972218){3}
\rput[bl](9.1027775,-3.6572218){184}
\rput[bl](10.502778,-4.137222){376}
\rput[bl](8.902778,-5.7172217){564}
\psline[linecolor=black, linewidth=0.08](5.6027775,4.402778)(6.802778,5.6027784)(5.6027775,4.402778)(4.4027777,5.6027784)(4.4027777,5.6027784)
\psdots[linecolor=black, dotstyle=o, dotsize=0.8, fillcolor=white](6.802778,5.6027784)
\psdots[linecolor=black, dotstyle=o, dotsize=0.8, fillcolor=white](4.4027777,5.6027784)
\psline[linecolor=black, linewidth=0.08](2.0027778,-3.9972217)(5.6027775,-0.39722168)(5.6027775,-0.39722168)(9.202778,-3.9972217)(9.202778,-3.9972217)
\rput[bl](4.122778,5.442778){384}
\rput[bl](6.509444,5.4827785){256}
\psdots[linecolor=black, dotstyle=o, dotsize=0.8, fillcolor=white](6.802778,-1.5972217)
\psdots[linecolor=black, dotstyle=o, dotsize=0.8, fillcolor=white](8.002778,-2.7972217)
\psdots[linecolor=black, dotstyle=o, dotsize=0.8, fillcolor=white](4.4027777,-1.5972217)
\psdots[linecolor=black, dotstyle=o, dotsize=0.8, fillcolor=white](3.2027776,-2.7972217)
\rput[bl](4.282778,-1.690555){8}
\rput[bl](3.096111,-2.9038885){4}
\rput[bl](6.589444,-1.730555){40}
\rput[bl](7.802778,-2.930555){88}
\end{pspicture}
}
\end{center}
\caption{ Nanostar $D_2$ related to Theorem \ref{nano}}\label{nanostar}
\end{figure}

\section{Acknowledgements} 

The first author would like to thank the Research Council of Norway (NFR Toppforsk Project Number 274526, Parameterized Complexity for Practical Computing) and Department of Informatics, University of
Bergen for their support. Also he is thankful to Michael Fellows and
Michal Walicki for conversations and appreciation to them and Frances
Rosamond for sharing their pearls of wisdom with him during the course
of this research.

\end{document}